\def\N{\mathbb N}
\def\A{\mathcal A}
\def\Codec{\mathrm{Dec}}
\def\Z{\mathcal Z}
\def\P{\mathcal P}
\def\u{\mathbf u}
\def\Pred{\mathcal{P}^{\mathrm{rel}}}
\def\AC{\mathrm{AC}}
\newtheorem{lemma}{Lemma}[section]
\newtheorem{proposition}[lemma]{Proposition}
\newtheorem{corollary}[lemma]{Corollary}
\newtheorem{observation}[lemma]{Observation}
\theoremstyle{definition}
\newtheorem{definition}[lemma]{Definition}
\theoremstyle{remark}
\newtheorem{remark}[lemma]{Remark}
\begin{document}

\title{Abelian complexity and Abelian co-decomposition}

\author{Ond\v rej Turek
\medskip \\
\normalsize{Laboratory of Physics, Kochi University of Technology} \\
\normalsize{Tosa Yamada, Kochi 782-8502, Japan} \\
\normalsize{email: \texttt{ondrej.turek@kochi-tech.ac.jp}}
}


\maketitle



\begin{abstract}
We propose a technique for exploring the abelian complexity of recurrent infinite words, focusing particularly on infinite words associated with Parry numbers. Using that technique, we give the affirmative answer to the open question posed by Richomme, Saari and Zamboni, whether the abelian complexity of the Tribonacci word attains each value in $\{4,5,6\}$ 
infinitely many times.
\end{abstract}

\section{Introduction}

Abelian complexity is now a widely studied property of infinite words. The first appearance of the idea dates back to the seventies, when Coven and Hedlund realized that periodic words and Sturmian words can be alternatively characterized using Parikh vectors~\cite{CoHe}.
Their results have been recently generalized by Richomme, Saari and Zamboni in~\cite{RSZ}, where the term ``abelian complexity'' itself has been introduced. That work initiated a systematic study of abelian properties of words: \cite{RSZ} was quickly followed by a series of related papers, both on general topics in abelian complexity~\cite{CR,CRSZ} and on abelian complexity of concrete infinite words~\cite{RSZ2} as well as of certain families of words~\cite{Tu2,BBT}.

In general, calculating $\AC(n)$ for a given infinite word is a difficult problem. Only a few results, as well as effective methods, are known so far. Even the simpler question, whether a given value $k$ of the function $\AC$ is attained finitely or infinitely many times, is usually hard to be answered, expecially for $k$ different from the extremal values $\max\AC$, $\min\AC$. For example, it is known that the abelian complexity of the Tribonacci word $\mathbf{t}$ (recall that $\mathbf{t}$ is the fixed point of the substitution $0\mapsto01$, $1\mapsto02$, $2\mapsto0$) satisfies $\AC_\mathbf{t}(n)\in\{3,4,5,6,7\}$ for all $n$, but only for the values $3$ and $7$ it is proved that they are attained infinitely many times, see~\cite{RSZ2}. Similarly, for $\u^{(p)}$ being the fixed point of the substitution $L\mapsto L^pS$, $S\mapsto M$, $M\mapsto L^{p-1}S$ for an arbitrary $p\geq2$, it has been proved $\AC_{\u^{(p)}}(n)\in\{3,4,5,6,7\}$, but so far only the value $7$ is known to be attained infinitely many times~\cite{Tu2} (for additional information on those words see~\cite{FrGaKr,KP}).

In this paper we develop a method for dealing with the abelian complexity of recurrent infinite words, which is fitted especially to infinite words associated with Parry numbers. It can be used for an effective calculation of $\AC(n)$ for a given $n$, as well as for proving that a certain value of $\AC$ is attained infinitely many times. To demonstrate that, we consider an open question posed by Richomme, Saari and Zamboni in~\cite{RSZ2}, whether the abelian complexity of the Tribonacci word attains each value in $\{4,5,6\}$ infinitely often; with the help of the method, we obtain the affirmative answer.

\section{Preliminaries}

An alphabet $\A$ is a finite set of symbols called \emph{letters}.
Any concatenation of letters from $\A$ is called a \emph{word}. The set $\A^*$ of all finite words over $\A$ including the empty word $\varepsilon$ is a free monoid. For any $w=w_0w_1w_2\cdots w_{n-1}\in\A^*$, the \emph{length} of $w$ is defined as $|w|=n$. The length of the empty word is by definition $|\varepsilon|=0$.

An infinite sequence of letters from $\A$ is called \emph{infinite word} and the set of all infinite words over $\A$ is denoted by $\A^\N$.

A finite word $w$ is a \emph{factor} of a (finite or infinite) word $v$ if there exists a finite word $x$ and a (finite or infinite, respectively) word $y$ such that $v=xwy$. The word $w$ is called a \emph{prefix} of $v$ if $x=\varepsilon$, and a \emph{suffix} of $v$, if $y=\varepsilon$.

An infinite word $v$ is \emph{recurrent} if any factor of $v$ occurs infinitely often in $v$.

If $w\in\A^*$ and $k\in\N$, $w^k$ stands for the concatenation of $k$ words $w$, thus $w^k=\underbrace{ww\cdots w}_{k\text{ times } w}$. We also set $w^0=\varepsilon$.
One can introduce negative powers as well. If a word $v\in\A^*$ has the prefix $w^k$, $k\in\N$, then the symbol $w^{-k}v$ denotes the word satisfying $w^kw^{-k}v=v$. Similarly, if a $v\in\A^\N$ has the suffix $w^k$ for a $k\in\N$, then $vw^{-k}$ denotes the word with the property $vw^{-k}w^k=v$.

\subsection{Parikh vectors, abelian complexity, and relative Parikh vectors}

Let $\A=\{0,1,2,\ldots,m-1\}$. For any $\ell\in\A$ and for any $w\in\A^*$, the symbol $|w|_\ell$ denotes the number of occurences of the letter $\ell$ in the word $w$. The \emph{Parikh vector} of $w$ is the $m$-tuple $\Psi(w)=(|w|_0,|w|_1,\ldots,|w|_{m-1})$; note that $|w|_0+|w|_1+\cdots+|w|_{m-1}=|w|$.

For any given infinite word $\u$, we set
$$
\mathcal{P}_\u(n)=\left\{\Psi(w)\,\left|\,\text{$w$ is a factor of $\u$}, |w|=n\right.\right\},
$$
thus $\mathcal{P}_\u(n)$ denotes the set of all Parikh vectors corresponding to factors of $\u$ having the length $n$.
\emph{Abelian complexity} of the word $\u$ is the function $\AC_\u:\N\to\N$ defined as
\begin{equation}\label{AC}
\AC_\u(n)=\#\P_\u(n)\,,
\end{equation}
where $\#$ denotes the cardinality.
Let us introduce two new terms:

\begin{definition}
Let $\u_{[n]}$ denote the prefix of $\u$ of the length $n\in\N_0$.
\begin{itemize}
\item If $w$ is a factor of $\u$ of the length $n$, then the \emph{relative Parikh vector} of $w$ is defined as
$$
{\Psi}^\mathrm{rel}(w)=\Psi(w)-\Psi(\u_{[n]})\,.
$$
\item The \emph{set of relative Parikh vectors} corresponding to the length $n$ is the set
$$
\Pred_\u(n):=\left\{\left.{\Psi}^\mathrm{rel}(w)\;\right|\;\text{$w$ is a factor of $\u$}, |w|=n\right\}\,.
$$
\end{itemize}
\end{definition}

\begin{remark}
The idea of transforming the Parikh vectors $\Psi(w)$ into the relative Parikh vectors ${\Psi}^{\mathrm{rel}}(w)$ slightly resembles a technique of Adamczewski used in~\cite{adamczewski}, where frequencies of letters were employed for a simplification of the study of balance properties of fixed points of primitive substitutions.
\end{remark}

Since prefixes of $\u$ will play an important role in the sequel, the symbol $\u_{[n]}$ will be used in the same meaning throughout the whole paper.

Note that the cardinality of $\Pred_\u(n)$ is equal to the cardinality of $\P_\u(n)$, whence we obtain, with regard to~\eqref{AC},
\begin{equation}\label{ACred}
\AC_\u(n)=\#\Pred_\u(n)\,.
\end{equation}

An infinite word $\u$ is said to be \emph{$c$-balanced}, if for every $\ell\in\A$ and for every pair of factors $v$, $w$ of $\u$ such that $|v|=|w|$, it holds
$\left||v|_\ell-|w|_\ell\right|\leq c$.

\begin{observation}\label{ObsPred}
For all $n\in\N$, the set of relative Parikh vectors has the following properties:
\begin{itemize}
\item[(i)] $\vec{0}\in\Pred_\u(n)$ for all $n$.
\item[(ii)] If $(\psi'_0,\psi'_1,\ldots,\psi'_{m-1})\in\Pred_\u(n)$ and $\u$ is $c$-balanced, then $|\psi'_\ell|\leq c$ for all $\ell\in\A$.
\end{itemize}
\end{observation}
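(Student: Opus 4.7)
Both statements follow directly from unwinding the definitions, so my plan is essentially to check that the definitions do the work.

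For part (i), I would observe that the prefix $\u_{[n]}$ is itself a factor of $\u$ of length $n$, hence eligible to appear in the set defining $\Pred_\u(n)$. Substituting $w=\u_{[n]}$ into the definition of the relative Parikh vector gives
$$
\Psi^{\mathrm{rel}}(\u_{[n]}) = \Psi(\u_{[n]}) - \Psi(\u_{[n]}) = \vec 0,
$$
so $\vec 0 \in \Pred_\u(n)$.

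For part (ii), I would fix an element $(\psi'_0,\psi'_1,\ldots,\psi'_{m-1})\in\Pred_\u(n)$ and, by definition, pick a factor $w$ of $\u$ with $|w|=n$ realizing it, so that
$$
\psi'_\ell = |w|_\ell - |\u_{[n]}|_\ell \qquad \text{for every } \ell\in\A.
$$
The key point is that both $w$ and $\u_{[n]}$ are factors of $\u$ of the same length $n$, which is exactly the setting in which the $c$-balance hypothesis applies. Invoking it yields $\bigl||w|_\ell - |\u_{[n]}|_\ell\bigr| \leq c$, i.e.\ $|\psi'_\ell|\leq c$, for each $\ell$.

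There is essentially no obstacle here: the observation is a direct consequence of the definitions, and the only thing to keep in mind is that the prefix $\u_{[n]}$ is a factor of $\u$, so that the $c$-balance inequality can be applied to the pair $(w,\u_{[n]})$. The whole content of the observation is to record these two elementary facts for use later in the paper.
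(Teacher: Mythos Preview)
Your proof is correct and follows exactly the same approach as the paper: for (i) you use that $\u_{[n]}$ is itself a factor of length $n$ with relative Parikh vector $\vec 0$, and for (ii) you apply the $c$-balance condition to the pair $(w,\u_{[n]})$. There is nothing to add.
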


\begin{proof}
\begin{itemize}
\item[(i)]
It is easy to see that $\vec{0}$ is the relative Parikh vector of $\u_{[n]}$, hence $\vec{0}\in\Pred_\u(n)$.
\item[(ii)] Let $w$ be a factor of $\u$ such that $(\psi'_0,\psi'_1,\ldots,\psi'_{m-1})={\Psi}^\mathrm{rel}(w)$, $|w|=n$. Then $|\psi'_\ell|=\left||w|_\ell-|\u_{[n]}|_\ell\right|$. Since $\u$ is $c$-balanced, for any pair $v,w$ of factors of $\u$, it holds $\left||w|_\ell-|v|_\ell\right|\leq c$ for all $\ell\in\A$. Particular choice $v=\u_{[n]}$ gives the statement (ii).
\end{itemize}
\end{proof}

\begin{remark}
Let $\u$ be a $c$-balanced word. With regard to the part (ii) of Observation~\ref{ObsPred}, the main advantage of dealing with relative Parikh vectors instead of with ``standard'' Parikh vectors is twofold:
\begin{itemize}
\item The components of $\Psi(w)$ grow to infinity with growing $|w|$ (because $|w|_0+|w|_1+\cdots+|w|_{m-1}=|w|$), whereas the components of ${\Psi}^\mathrm{rel}(w)$ are bounded.
\item The set of all Parikh vectors $\left\{\left.\Psi(w)\right|\text{$w$ is a factor of $\u$}\right\}$ is infinite, whereas the set of all relative Parikh vectors $\left\{\left.{\Psi}^\mathrm{rel}(w)\right|\text{$w$ is a factor of $\u$}\right\}$ is finite. (This is a very important advantage.)
\end{itemize}
\end{remark}

\subsection{Fixed points of substitutions, normal $F$-representation}

A mapping $\varphi:\A^*\to\A^*$ is called a \emph{morphism} if $\varphi(vw)=\varphi(v)\varphi(w)$ for all $v,w\in\A^*$. A morphism is fully determined if we define $\varphi(\ell)$ for all $\ell\in\A$.
A morphism is called a \emph{substitution} if $\varphi(\ell)\neq\varepsilon$ for all $\ell\in\A$ and at the same time there is an $\ell'\in\A$ such that $|\varphi(\ell')|>1$.
The action of $\varphi$ can be naturally extended to infinite words in the way
$$
\varphi(u_0u_1u_2\cdots)=\varphi(u_0)\varphi(u_1)\varphi(u_2)\cdots\,.
$$
An infinite word $\u$ is called a \emph{fixed point} of the substitution $\varphi$ if $\varphi(\u)=\u$.

For a given substitution $\varphi$, let us set $F_k=|\varphi^k(0)|$ for every $k\in\N_0$. The sequence $(F_k)_{k=0}^{\infty}$ is strictly increasing, thus allows to construct the \emph{normal $F$-representations} (cf.~\cite{Lo}) of positive integers. For any $n\in\N_0$, the normal $F$-representation of $n$, denoted by $\langle n\rangle_F$, takes the form
\begin{equation}\label{Urepr}
\langle n \rangle_F=(d_N, d_{N-1}, \ldots, d_1, d_0)\,,
\end{equation}
where $n=\sum_{i=0}^N d_i F_i$, and moreover, the coefficients $d_i$ are obtained by the greedy algorithm:
\begin{enumerate}
\item Find $N\in\N_0$ such that $n<F_{N+1}$.
\item Put $x_{N}:=n$.
\item For $i=N, N-1,\ldots,1,0$ set $d_i:=\left\lfloor\frac{x_i}{F_i}\right\rfloor$ and $x_{i-1}:=x_i-d_iF_i$.
\end{enumerate}
Note that the number $N$ can be chosen as any integer such that $n<F_{N+1}$, i.e., not necessarily the smallest one satisfying $F_N\leq n<F_{N+1}$. The normal $F$-representation of $n$ can therefore begin with a block of zeros, and for the same reason, the normal $F$-representations $(0,0,...,0,d_N,d_{N-1},...,d_0)$ and $(d_N,d_{N-1},...,d_0)$ are equivalent.


\subsection{Infinite words associated with Parry numbers}

An important class of infinite words is represented by fixed points of substitutions associated with Parry numbers. There exist two types of them, see~\cite{Fa}:

\begin{itemize}
\item Infinite words associated with \emph{simple Parry numbers} are fixed points of substitutions of the type
\begin{equation}\label{simpleParry}
\begin{array}{ccl}
0 & \mapsto & 0^{\alpha_0}1 \\
1 & \mapsto & 0^{\alpha_1}2 \\
 & \vdots & \\
m-2 & \mapsto & 0^{\alpha_{m-2}}(m-1) \\
m-1 & \mapsto & 0^{\alpha_{m-1}}
\end{array}
\end{equation}
\item Infinite words associated with \emph{non-simple Parry numbers} are fixed points of substitutions of the type
\begin{equation}\label{nonsimpleParry}
\begin{array}{ccl}
0 & \mapsto & 0^{\alpha_0}1 \\
1 & \mapsto & 0^{\alpha_1}2 \\
 & \vdots & \\
m & \mapsto & 0^{\alpha_{m}}(m+1) \\
 & \vdots & \\
m+p-2 & \mapsto & 0^{\alpha_{m+p-2}}(m+p-1) \\
m+p-1 & \mapsto & 0^{\alpha_{m+p-1}}m
\end{array}
\end{equation}
\end{itemize}

The exponents $\alpha_j$ in~\eqref{simpleParry} and \eqref{nonsimpleParry} obey certain conditions (see~\cite{Pa,Fa}), in particular:
\begin{equation}\label{alpha}
\begin{array}{cc}
\text{both substitutions \eqref{simpleParry}, \eqref{nonsimpleParry}:} & \alpha_0\geq1 \quad\text{and}\quad \alpha_\ell\leq \alpha_0 \;\text{ for all $\ell\in\A$} \\
\text{substitution \eqref{nonsimpleParry}:} & (\exists \ell\in\{m,m+1,\ldots,m+p-1\})(\alpha_\ell\geq1)
\end{array}
\end{equation}
Hence we obtain an observation:

\begin{observation}\label{>=|A|}
For any $\ell\in\A$, the word $\varphi^{m}(\ell)$ for $\varphi$ given by~\eqref{simpleParry} and the word $\varphi^{m+p}(\ell)$ for $\varphi$ given by~\eqref{nonsimpleParry} begin with $0$.
\end{observation}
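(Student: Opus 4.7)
The plan is to trace how the first letter of $\varphi^k(\ell)$ evolves in $k$. Write $a_k$ for this first letter, so $a_0=\ell$ and, since $\varphi$ is a morphism, $a_{k+1}$ is the first letter of $\varphi(a_k)$. Reading off the substitutions~\eqref{simpleParry} and~\eqref{nonsimpleParry}, one sees that $\varphi(j)$ begins with $0$ exactly when $\alpha_j\geq 1$; otherwise $\varphi(j)$ begins with $j+1$ (for $j<m-1$ in the simple case, or for $j<m+p-1$ in the non-simple case), while in the non-simple case $\varphi(m+p-1)$ begins with $m$. Since $\alpha_0\geq 1$ by~\eqref{alpha}, as soon as $a_k=0$ we have $a_{k'}=0$ for all $k'\geq k$; the task therefore reduces to bounding how many iterations are needed before the trajectory $(a_k)$ first hits $0$.

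For the simple Parry substitution, note that $\alpha_{m-1}\geq 1$ is forced: $\varphi(m-1)=0^{\alpha_{m-1}}$ must be non-empty because $\varphi$ is a substitution. Starting from any $\ell\in\A$, the sequence $(a_k)$ therefore either jumps to $0$ at some earlier step, or strictly increases through $\ell,\ell+1,\ldots,m-1$, from which the next step is forced to be $0$. In either case some $k\leq m-\ell\leq m$ satisfies $a_k=0$, so $a_m=0$ and $\varphi^m(\ell)$ begins with $0$.

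The non-simple case is analogous, except that the trajectory may wrap: inside $\{m,m+1,\ldots,m+p-1\}$ it advances $j\mapsto j+1$ (cycling from $m+p-1$ back to $m$ if $\alpha_{m+p-1}=0$), and jumps to $0$ as soon as it visits a letter with $\alpha\geq 1$. Condition~\eqref{alpha} guarantees the existence of at least one $\ell^*\in\{m,\ldots,m+p-1\}$ with $\alpha_{\ell^*}\geq 1$. From an arbitrary starting $\ell$, at most $m$ iterations are needed to bring the trajectory into $\{m,\ldots,m+p-1\}$, after which at most $p$ further iterations are needed to reach $\ell^*$; hence $a_k=0$ for some $k\leq m+p$, and $\varphi^{m+p}(\ell)$ begins with $0$.

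The argument is a straightforward iteration on letters with no real obstacle; the single point worth flagging is that the condition $\alpha_{m-1}\geq 1$ used in the simple case is not stated explicitly in~\eqref{alpha} but is an automatic consequence of the non-emptiness requirement $\varphi(\ell)\neq\varepsilon$ built into the definition of a substitution.
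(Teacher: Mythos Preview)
The paper states this observation without proof, so there is no argument to compare against; your iteration-on-first-letters proof is the natural one and is correct.

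One minor point of bookkeeping in the non-simple case: as written, ``at most $m$ steps into the cycle'' plus ``at most $p$ further steps to reach $\ell^*$'' would give $a_{m+p}=\ell^*$, not $a_{m+p}=0$, since one more application is needed to pass from $\ell^*$ to $0$. The conclusion is nonetheless safe because both of your intermediate bounds are loose by one: getting from $\ell\in\{1,\ldots,m-1\}$ into $\{m,\ldots,m+p-1\}$ takes at most $m-1$ steps (and $\ell=0$ is already done), and within a cycle of length $p$ you reach $\ell^*$ in at most $p-1$ steps. Hence the first visit to $0$ occurs by step $(m-1)+(p-1)+1=m+p-1$, and $a_{m+p}=0$ follows. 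Your observation that $\alpha_{m-1}\geq 1$ is forced by the non-erasing condition on $\varphi$ is exactly the right ingredient in the simple case.
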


For both substitutions~\eqref{simpleParry} and~\eqref{nonsimpleParry}, the corresponding fixed points can be formally written as $\u=\lim_{k\to\infty}\varphi^k(0)$.
We will mostly call them \emph{Parry words}, and where appropriate, the notions \emph{simple Parry word} and \emph{non-simple Parry word} will be used.

\begin{remark}
Denoting the letters by numeric symbols $0,1,2,\ldots$, such as in~\eqref{simpleParry} and~\eqref{nonsimpleParry}, allows to identify every letter $\ell\in\A$ with the corresponding integer $\ell\in\N_0$. Thanks to this conveninent feature we can, for instance, regard the symbol $\ell+j$ as a letter corresponding to the value of the sum.
\end{remark}

For any substitution~\eqref{simpleParry} and~\eqref{nonsimpleParry}, a normal $F$-representation~\eqref{Urepr} of integers can be constructed.
Its coefficients $d_i$ are all less than or equal to $\alpha_0$, which follows from the inequality $F_{N+1}=|(\varphi^N(0))^{\alpha_0}\varphi^N(1)|<|(\varphi^N(0))^{\alpha_0+1}|=(\alpha_0+1)F_N$ holding for all $N\in\N_0$ (recall that $\alpha_1\leq\alpha_0$, see~\eqref{alpha}).

Importantly, if $\u$ is a Parry word, then $\langle n \rangle_F$ essentially describes the structure of $\u_{[n]}$, cf.~\cite{Fa}:

\begin{proposition}\label{struktura pref. u}
Let $\u$ be a fixed point of the substitution $\varphi$ associated to a Parry number. Let $\u_{[n]}$ be a prefix of $\u$ of the length $n$, $n\in\N$. If $\langle n\rangle_F=(d_{N},d_{N-1},\ldots,d_1,d_0)$, then
$\u_{[n]}=\left(\varphi^{N}(0)\right)^{d_{N}}\left(\varphi^{N-1}(0)\right)^{d_{N-1}}\cdots\left(\varphi(0)\right)^{d_1}0^{d_0}$.
\end{proposition}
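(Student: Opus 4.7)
I would prove the proposition by induction on $N$, the length of $\langle n\rangle_F$ minus one. The base case $N=0$ is immediate: then $n=d_0\le\alpha_0$, and since $\u=\varphi(\u)$ begins with $\varphi(0)=0^{\alpha_0}1$, the first $\alpha_0$ letters of $\u$ are zero, giving $\u_{[d_0]}=0^{d_0}$.

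For the inductive step with $\langle n\rangle_F=(d_N,d_{N-1},\ldots,d_0)$ and $d_N\ge 1$, set $n'=n-d_NF_N$. The greedy algorithm gives $\langle n'\rangle_F=(d_{N-1},\ldots,d_0)$ and $n'<F_N$, so by the inductive hypothesis $\u_{[n']}=(\varphi^{N-1}(0))^{d_{N-1}}\cdots 0^{d_0}$. Because $d_N\le\alpha_0$, the same argument as in the base case shows $\u_{[d_N]}=0^{d_N}$, and then applying $\varphi^N$ to this using $\u=\varphi^N(\u)$ yields that $\varphi^N(0)^{d_N}=\varphi^N(\u_{[d_N]})$ is a prefix of $\u$ of length $d_NF_N$. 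The proof therefore reduces to showing $u_{d_NF_N+i}=u_i$ for $0\le i<n'$, i.e., that the block of $n'$ letters of $\u$ immediately following $\varphi^N(0)^{d_N}$ coincides with $\u_{[n']}$.

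This final positional equality is easy when $d_N<\alpha_0$: the very same argument shows that $\varphi^N(0)^{d_N+1}$ is a prefix of $\u$, so the $F_N$ letters of $\u$ starting at position $d_NF_N$ form another copy of $\varphi^N(0)=\u_{[F_N]}$, and the bound $n'<F_N$ closes the argument. The harder case is $d_N=\alpha_0$: the block at position $d_NF_N$ is then $\varphi^N(1)=\varphi^{N-1}(0)^{\alpha_1}\varphi^{N-1}(2)$, whereas $\u$ itself begins with $\varphi^{N-1}(0)^{\alpha_0}\varphi^{N-1}(1)\cdots$, and the two only necessarily coincide on the shared initial segment $\varphi^{N-1}(0)^{\alpha_1}$ (using $\alpha_1\le\alpha_0$ from~\eqref{alpha}). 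The key observation is that $n'<|\varphi^N(1)|$ combined with the uniform bound $|\varphi^{N-1}(\ell)|\le F_{N-1}$ forces $d_{N-1}\le\alpha_1$; and if $d_{N-1}=\alpha_1$ one iterates one level deeper, producing a cascade of nested saturation constraints $d_{N-k}\le\alpha_k$ that together pin down the required positional equality. I expect this saturating cascade in the case $d_N=\alpha_0$ to be the main technical obstacle, as it reflects the hierarchical self-similar structure of $\u$ that must be unwound layer by layer; an analogous cascade handles the non-simple Parry substitution~\eqref{nonsimpleParry}, with the wraparound $m+p-1\mapsto 0^{\alpha_{m+p-1}}m$ replacing the straight exhaustion of letters at the bottom.
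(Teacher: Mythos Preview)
The paper gives no proof of this proposition: it is quoted from Fabre~\cite{Fa} as a known structural fact, so there is nothing in the paper to compare against. Your induction on $N$ with the dichotomy $d_N<\alpha_0$ versus $d_N=\alpha_0$ is the standard direct argument, and your identification of the saturated case as the crux is correct. The ``cascade'' $d_{N-k}\le\alpha_k$ you anticipate is exactly the Parry admissibility condition satisfied by greedy digit strings --- by construction of the greedy algorithm, the sequence $(d_N,d_{N-1},\ldots)$ is lexicographically below $(\alpha_0,\alpha_1,\ldots)$ at every shift --- so the constraints you need do hold.

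That said, the sketch stops short of a proof in two places. First, the bound $|\varphi^{N-1}(\ell)|\le F_{N-1}$ that you invoke is true for Parry substitutions but does not drop out of $\alpha_\ell\le\alpha_0$ by a one-line induction on the exponent (the inductive step stalls when some $\alpha_j=\alpha_0$); it needs its own short argument using the full Parry condition on $(\alpha_j)$. Second, and more substantially, the cascade is described but not executed: after matching $\varphi^{N-1}(0)^{d_{N-1}}$ inside $\varphi^N(1)$, the remaining $n''=n'-d_{N-1}F_{N-1}$ letters must still be shown to agree with $\u_{[n'']}$, and in the saturated sub-case $d_{N-1}=\alpha_1$ this is the same problem one level down, now comparing a prefix of $\varphi^{N-1}(2)$ with a prefix of $\u$. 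To close the recursion cleanly you should strengthen the induction hypothesis so that such comparisons are already covered --- for instance, prove simultaneously a statement about prefixes of $\varphi^N(\ell)$ for every $\ell\in\A$, not only $\ell=0$ --- rather than attempt to unwind the cascade by hand; with that strengthening both the simple and the non-simple Parry cases go through uniformly.
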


\section{Abelian co-decomposition}\label{MainIdea}

The method for calculating $\AC(n)$ we are going to establish is based on counting the number of relative Parikh vectors (see~\eqref{ACred}). In this section we present its keynote, further development will follow in subsequent sections.

\begin{observation}\label{B(n)}
Let $\u$ be a recurrent infinite word. Then for any $n\in\N$ there exists a number $B(n)$ such that the prefix $\u_{[B(n)]}$ of $\u$ has these properties:
\begin{itemize}
\item $\u_{[B(n)]}$ contains all factors of $\u$ of the length $n$,
\item $\u_{[B(n)]}$ begins and ends with $\u_{[n]}$.
\end{itemize}
\end{observation}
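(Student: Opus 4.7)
The plan is to combine two basic facts: the alphabet $\A$ is finite, so there are only finitely many factors of $\u$ of length $n$; and recurrence guarantees that each such factor appears arbitrarily far out in $\u$. Together these will let me construct $B(n)$ by taking a long enough prefix to contain every length-$n$ factor and then extending slightly so that the prefix terminates at an occurrence of $\u_{[n]}$.

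First I would enumerate the length-$n$ factors of $\u$ as $w_1, w_2, \ldots, w_r$, with $r \leq (\#\A)^n < \infty$. By recurrence each $w_i$ occurs infinitely often in $\u$, so for each $i$ I can pick some occurrence of $w_i$ and let $M_i$ be its ending position. Setting $M := \max_{i} M_i$, the prefix $\u_{[M]}$ contains every $w_i$ as a factor, which already yields the first bullet point.

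Next I would secure the second bullet point. Since $\u_{[n]}$ is itself a factor of $\u$, recurrence provides an occurrence of $\u_{[n]}$ starting at some position $p \geq \max(0, M - n)$. Define
\[
B(n) := p + n.
\]
Then $B(n) \geq M$, so $\u_{[B(n)]}$ still contains every $w_i$; $\u_{[B(n)]}$ ends with $\u_{[n]}$ by the choice of $p$; and $\u_{[B(n)]}$ begins with $\u_{[n]}$ because $B(n) \geq n$ and any prefix of $\u$ of length at least $n$ trivially starts with $\u_{[n]}$.

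I do not anticipate a genuine obstacle: the statement only asserts existence of $B(n)$, so no effective bound is required, and the argument is a direct application of the definition of recurrence together with the finiteness of the factor set. If one wanted a canonical choice, $B(n)$ could be taken to be the smallest integer $B \geq M$ such that $\u_{[B]}$ has $\u_{[n]}$ as a suffix, but this refinement plays no role in the sequel.
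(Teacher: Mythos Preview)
Your proposal is correct and follows essentially the same approach as the paper's proof: first use finiteness of the alphabet to find a prefix containing all length-$n$ factors, then use recurrence of $\u_{[n]}$ to extend this prefix so that it ends with $\u_{[n]}$. The only minor remark is that invoking recurrence to place each $w_i$ inside a prefix is unnecessary---each $w_i$ being a factor already gives one occurrence, which suffices for defining $M$---but this does no harm.
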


\begin{proof}
The word $\u$ has finitely many factors of the length $n$ (their number is bounded by $(\#\A)^n$), thus there is an $R'(n)\in\N$ such that all these factors occur in the prefix $\u_{[R'(n)]}$ (cf. also~\cite{AB}). Since $\u$ is recurrent, it is possible to extend $\u_{[R'(n)]}$ to the right to get a longer prefix $\u_{[B(n)]}$ of $\u$ which ends with $\u_{[n]}$.
\end{proof}

From now on let $B(n)$ have the meaning introduced in Observation~\ref{B(n)}. Note that $B(n)$ (and $\u_{[B(n)]}$) depends on the given recurrent word $\u$.

\begin{proposition}\label{posun.}
Let $\u$ be a recurrent word. A word $w$ is a factor of $\u$ of the length $n$ if and only if it can be written as
\begin{equation}\label{wzuy}
w=x^{-1}\u_{[n]}y\,,
\end{equation}
where $x$ is a prefix of $\u_{[B(n)]}\u_{[n]}^{-1}$, $y$ is the prefix of $\u_{[n]}^{-1}\u_{[B(n)]}$, and $|x|=|y|$.
\end{proposition}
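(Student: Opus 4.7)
The plan is to reduce both directions of the equivalence to the observation that the common length $|x|=|y|=i$ parametrizes the starting position of an occurrence of $w$ inside the prefix $\u_{[B(n)]}$. By Observation~\ref{B(n)}, $\u_{[B(n)]}$ begins and ends with $\u_{[n]}$, so both $\u_{[B(n)]}\u_{[n]}^{-1}$ and $\u_{[n]}^{-1}\u_{[B(n)]}$ are well defined and consist, as words, of the first $B(n)-n$ and the last $B(n)-n$ letters of $\u_{[B(n)]}$ respectively. In particular $\u_{[B(n)]}\u_{[n]}^{-1}=\u_{[B(n)-n]}$, so its prefix of length $i$ is simply $\u_{[i]}$.

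For the implication ($\Leftarrow$), given $x$, $y$ as in the statement with $|x|=|y|=i$, I would observe that $\u_{[n]}y$ is a prefix of $\u_{[B(n)]}$ of length $n+i$, hence equals $\u_{[n+i]}$, while $x=\u_{[i]}$. Reading the right-hand side of \eqref{wzuy} as $\u_{[i]}^{-1}\u_{[n+i]}$, it follows at once that $w$ is the factor of $\u$ of length $n$ starting at position $i$.

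For the implication ($\Rightarrow$), I would invoke the defining property of $B(n)$: the prefix $\u_{[B(n)]}$ contains every factor of $\u$ of length $n$, so any such $w$ occurs in $\u$ starting at some position $i$ with $0\leq i\leq B(n)-n$. Taking $x$ to be the prefix of $\u_{[B(n)]}\u_{[n]}^{-1}$ of length $i$ and $y$ to be the prefix of $\u_{[n]}^{-1}\u_{[B(n)]}$ of length $i$, the identity $w=x^{-1}\u_{[n]}y$ follows by the same computation used in the other direction.

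The one conceptual subtlety is how to read $x^{-1}\u_{[n]}y$ when $|x|>n$: in that range $x$ is not a prefix of $\u_{[n]}$, so the inverse convention introduced in the preliminaries does not literally apply to $x^{-1}\u_{[n]}$ alone. The expression has to be parsed as $x^{-1}(\u_{[n]}y)$ with $\u_{[n]}y=\u_{[n+i]}$, which is legitimate because $x=\u_{[i]}$ is a prefix of $\u_{[n+i]}$ (both being prefixes of $\u$ and $i\leq n+i$). Once this convention is fixed, both directions amount to reading off positions in $\u$ and no further obstacle arises.
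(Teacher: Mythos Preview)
Your proof is correct and follows essentially the same line as the paper's: both arguments rest on the fact that $xw=\u_{[n]}y$ is a prefix of $\u_{[B(n)]}$, with $|x|=|y|$ encoding the starting position of $w$ in that prefix. Your explicit identification $x=\u_{[i]}$, $\u_{[n]}y=\u_{[n+i]}$ and your remark on parsing $x^{-1}\u_{[n]}y$ as $x^{-1}(\u_{[n]}y)$ when $|x|>n$ are welcome clarifications that the paper leaves implicit.
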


\begin{proof}
Let $w$ be a factor of $\u$ of the length $n$.
It follows from the definition of $B(n)$ that $w$ is a factor of $\u_{[B(n)]}$, therefore $\u_{[B(n)]}$ has a prefix $xw$ for a certain word $x$. At the same time $\u_{[B(n)]}$ has the prefix $\u_{[n]}$. Since $\u_{[n]}$ is not longer than $xw$ (because $|xw|\geq|w|=n$), the word $\u_{[n]}$ is a prefix of $xw$, thus there exists a word $y$ such that $xw=\u_{[n]}y$. This equality implies $w=x^{-1}\u_{[n]}y$ and (equivalently) $y=\u_{[n]}^{-1}xw$, hence $y$ is a prefix of $\u_{[n]}^{-1}\u_{[B(n)]}$ of the length $|y|=|xw|-|\u_{[n]}|=|x|+n-n=|x|$.

Conversely, if a word $w$ is given by~\eqref{wzuy}, then obviously $|w|=n$ and $w$ is a factor of $\u_{[B(n)]}$, therefore $w$ is a factor of $\u$ of the length $n$.
\end{proof}

\begin{corollary}\label{Pred.}
Let $\u$ be a recurrent word and $n\in\N$. Then
\begin{multline}\label{Pred(n)orig.}
\Pred_\u(n)=\left\{\Psi(y)-\Psi(x)\;\left|\;\text{$x$ is a prefix of $\u_{[B(n)]}\u_{[n]}^{-1}$,}\right.\right. \\
\left.\text{$y$ is a prefix of $\u_{[n]}^{-1}\u_{[B(n)]}$, $|x|=|y|$}\right\}\,.
\end{multline}
\end{corollary}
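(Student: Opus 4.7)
The plan is to derive the corollary directly from Proposition~\ref{posun.} by translating the characterization of factors into the language of Parikh vectors. The key tool is simply the additivity of the Parikh vector under concatenation: $\Psi(vw)=\Psi(v)+\Psi(w)$ for any $v,w\in\A^*$, which also implies $\Psi(x^{-1}v)=\Psi(v)-\Psi(x)$ whenever $x$ is a prefix of $v$.

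First I would recall that, by definition, $\Pred_\u(n)$ is the set of all vectors $\Psi^{\mathrm{rel}}(w)=\Psi(w)-\Psi(\u_{[n]})$ with $w$ ranging over the factors of $\u$ of length $n$. By Proposition~\ref{posun.}, the factors of length $n$ are precisely the words of the form $w=x^{-1}\u_{[n]}y$, where $x$ is a prefix of $\u_{[B(n)]}\u_{[n]}^{-1}$, $y$ is the prefix of $\u_{[n]}^{-1}\u_{[B(n)]}$, and $|x|=|y|$. So it suffices to rewrite $\Psi^{\mathrm{rel}}(w)$ in terms of $x$ and $y$ for such a decomposition.

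Here the identity $x w=\u_{[n]}y$ coming from $w=x^{-1}\u_{[n]}y$ does the whole job. Taking Parikh vectors of both sides yields $\Psi(x)+\Psi(w)=\Psi(\u_{[n]})+\Psi(y)$, whence
$$
\Psi^{\mathrm{rel}}(w)=\Psi(w)-\Psi(\u_{[n]})=\Psi(y)-\Psi(x).
$$
Substituting this into the defining formula for $\Pred_\u(n)$ produces the right-hand side of~\eqref{Pred(n)orig.}, and the converse direction (that every such $\Psi(y)-\Psi(x)$ does arise as some $\Psi^{\mathrm{rel}}(w)$) is the corresponding ``if'' part of Proposition~\ref{posun.}.

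There is essentially no obstacle: the proof is a two-line consequence of Proposition~\ref{posun.} together with the additivity of $\Psi$. The only point deserving a brief remark is the correct use of the negative-power notation, namely that $xw=\u_{[n]}y$ holds as an equality of words (so one may safely take Parikh vectors of both sides), which follows from the definition of $x^{-1}\u_{[n]}$ recalled in the preliminaries.
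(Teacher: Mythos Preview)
Your proof is correct and follows essentially the same route as the paper: recall the definition of $\Pred_\u(n)$, invoke Proposition~\ref{posun.} to write every factor of length $n$ as $w=x^{-1}\u_{[n]}y$, and then use additivity of $\Psi$ on $xw=\u_{[n]}y$ to obtain $\Psi(w)-\Psi(\u_{[n]})=\Psi(y)-\Psi(x)$. If anything, your write-up is slightly more explicit than the paper's, since you spell out the additivity step and separately note that the ``if'' direction of Proposition~\ref{posun.} yields the reverse inclusion.
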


\begin{proof}
The set $\Pred_\u(n)$ is defined as $\left\{\left.\Psi(w)-\Psi(\u_{[n]})\;\right|\;\text{$w$ is a factor of $\u$}, |w|=n\right\}$.
According to Proposition~\ref{posun.}, factors of $\u$ of the length $n$ are the words given as
$
w=x^{-1}\u_{[n]}y
$,
where $x$ is a prefix of $\u_{[B(n)]}\u_{[n]}^{-1}$, $y$ is a prefix of $\u_{[n]}^{-1}\u_{[B(n)]}$ and $|x|=|y|$.
Therefore $\Psi(w)-\Psi(\u_{[n]})=\Psi(y)-\Psi(x)$, which together with the definition of $\Pred_\u(n)$ gives
the formula~\eqref{Pred(n)orig.}.
\end{proof}

Corollary~\ref{Pred.} transforms the calculation of $\AC(n)$ into a comparison of prefixes of certain words.
Let us proceed to a trivial observation.

\begin{observation}\label{podobne.}
For all $n\in\N$,
it holds
$$
\Psi\left(\u_{[B(n)]}\u_{[n]}^{-1}\right)=\Psi\left(\u_{[n]}^{-1}\u_{[B(n)]}\right)\,.
$$
\end{observation}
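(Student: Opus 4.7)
The plan is to exploit the two defining features of $\u_{[B(n)]}$ from Observation~\ref{B(n)}: it both begins with $\u_{[n]}$ and ends with $\u_{[n]}$. This means $\u_{[B(n)]}\u_{[n]}^{-1}$ is just $\u_{[B(n)]}$ with its final copy of $\u_{[n]}$ chopped off, while $\u_{[n]}^{-1}\u_{[B(n)]}$ is $\u_{[B(n)]}$ with its initial copy of $\u_{[n]}$ chopped off. Both are well-defined as finite words over $\A$ (the negative powers of $\u_{[n]}$ are legitimate because $\u_{[n]}$ is the corresponding prefix and suffix), and both have length $B(n)-n$.

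Next I would use the fact that the Parikh map $\Psi$ is a monoid morphism from $(\A^*,\cdot)$ to $(\N_0^m,+)$, i.e.\ $\Psi(vw)=\Psi(v)+\Psi(w)$ for any concatenation. Writing $\u_{[B(n)]}=\u_{[n]}\cdot(\u_{[n]}^{-1}\u_{[B(n)]})=(\u_{[B(n)]}\u_{[n]}^{-1})\cdot\u_{[n]}$ and applying $\Psi$ to both decompositions yields
\[
\Psi(\u_{[n]})+\Psi(\u_{[n]}^{-1}\u_{[B(n)]})=\Psi(\u_{[B(n)]})=\Psi(\u_{[B(n)]}\u_{[n]}^{-1})+\Psi(\u_{[n]}).
\]
Cancelling $\Psi(\u_{[n]})$ gives the desired equality, with the common value being $\Psi(\u_{[B(n)]})-\Psi(\u_{[n]})$.

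There is essentially no obstacle here; the only thing to be careful about is that the symbols $\u_{[B(n)]}\u_{[n]}^{-1}$ and $\u_{[n]}^{-1}\u_{[B(n)]}$ actually denote well-defined words, which is guaranteed by Observation~\ref{B(n)} together with the convention for negative powers introduced in the preliminaries. Once that is noted, the computation above closes the proof in one line.
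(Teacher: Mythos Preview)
Your argument is correct and is exactly the straightforward verification one would expect: the paper itself labels this a ``trivial observation'' and gives no proof at all, so there is nothing to compare against beyond the one-line computation you wrote.
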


\begin{definition}\label{Def.Codec.}
Let $v,w$ be finite words such that $\Psi(v)=\Psi(w)$, and let
\begin{equation}\label{rozklad.}
\begin{array}{ccccccc}
v&=&z_0&z_1&z_2&\cdots&z_h \\
w&=&\tilde{z}_0&\tilde{z}_1&\tilde{z}_2&\cdots&\tilde{z}_h
\end{array}
\end{equation}
for non-empty factors $z_0,z_1,\ldots,z_h$ and $\tilde{z}_0,\tilde{z}_1,\ldots,\tilde{z}_h$ satisfying $\Psi(\tilde{z}_j)=\Psi(z_j)$ for all $j\in\{0,1,\ldots,h\}$. Then the set of ordered pairs
\begin{equation}\label{codec.}
\Codec\begin{pmatrix}v\\ w\end{pmatrix}=\left\{\begin{pmatrix}z_0\\ \tilde{z}_0\end{pmatrix},\begin{pmatrix}z_1\\ \tilde{z}_1\end{pmatrix},\begin{pmatrix}z_2\\ \tilde{z}_2\end{pmatrix},\cdots,\begin{pmatrix}z_h\\ \tilde{z}_h\end{pmatrix}\right\}
\end{equation}
is called \emph{abelian co-decomposition} of the ordered pair $\begin{pmatrix}v\\ w\end{pmatrix}$.
\end{definition}

\begin{remark}\label{rem.codec.}
\begin{itemize}
\item Abelian co-decomposition~\eqref{codec.} is in general not unique.
\item For any $v,w$ such that $\Psi(v)=\Psi(w)$ there exists at least one abelian co-decomposition, namely $\left\{\begin{pmatrix}v\\ w\end{pmatrix}\right\}$. 
\end{itemize}
\end{remark}

With regard to Observation~\ref{podobne.}, the abelian co-decomposition is applicable to the pair of factors $\u_{[B(n)]}\u_{[n]}^{-1}$, $\u_{[n]}^{-1}\u_{[B(n)]}$:

\begin{definition}\label{def.Z(n).}
For a given recurrent word $\u$ and for any $n\in\N$, we denote
\begin{equation}\label{Z(n).}
\Z_\u(n)=\Codec\begin{pmatrix}\u_{[B(n)]}\u_{[n]}^{-1}\\ \u_{[n]}^{-1}\u_{[B(n)]}\end{pmatrix}
\end{equation}
\end{definition}

Note that since $\Codec\begin{pmatrix}\u_{[B(n)]}\u_{[n]}^{-1}\\ \u_{[n]}^{-1}\u_{[B(n)]}\end{pmatrix}$ is not uniquely given, $\Z_\u(n)$ is not uniquely given, too. However, knowing any $\Z_\u(n)$ allows to calculate the set of relative Parikh vectors corresponding to the number $n$, as we will see in Proposition~\ref{prop.Pred(n).} below, and thus to solve the problem of determining $\AC_\u(n)$, because $\AC_\u(n)$ is nothing but the cardinality of $\Pred_\u(n)$, see formula~\eqref{ACred}.

\begin{proposition}\label{prop.Pred(n).}
Let $\u$ be a recurrent word. For any $n\in\N$, it holds
\begin{equation}\label{Pred(n).}
\Pred_\u(n)=\bigcup_{\begin{pmatrix}z\\ \tilde{z}\end{pmatrix}\in\Z_\u(n)}\left\{\Psi(s)-\Psi(r)\;\left|\;\text{$r$ is a prefix of $z$, $s$ is a prefix of $\tilde{z}$, $|s|=|r|$}\right.\right\}\,.
\end{equation}
\end{proposition}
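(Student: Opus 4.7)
The plan is to deduce this from Corollary~\ref{Pred.} by running a block-by-block analysis of the co-decomposition~\eqref{codec.}. Write $v=\u_{[B(n)]}\u_{[n]}^{-1}$ and $w=\u_{[n]}^{-1}\u_{[B(n)]}$, and fix the co-decomposition $v=z_0z_1\cdots z_h$, $w=\tilde z_0\tilde z_1\cdots\tilde z_h$ with $\Psi(z_j)=\Psi(\tilde z_j)$ for all $j$. The crucial elementary fact I would use throughout is that $\Psi(z_j)=\Psi(\tilde z_j)$ forces $|z_j|=|\tilde z_j|$ (the lengths are the sums of the Parikh vectors' components).

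For the inclusion $\supseteq$, I would take an arbitrary pair $\begin{pmatrix}z_j\\ \tilde z_j\end{pmatrix}\in\Z_\u(n)$ together with a prefix $r$ of $z_j$ and a prefix $s$ of $\tilde z_j$ with $|r|=|s|$, and set
\[
x:=z_0z_1\cdots z_{j-1}r\,,\qquad y:=\tilde z_0\tilde z_1\cdots\tilde z_{j-1}s\,.
\]
Then $x$ is a prefix of $v$, $y$ is a prefix of $w$, and $|x|=\sum_{i<j}|z_i|+|r|=\sum_{i<j}|\tilde z_i|+|s|=|y|$. Taking Parikh vectors gives $\Psi(y)-\Psi(x)=\sum_{i<j}\bigl(\Psi(\tilde z_i)-\Psi(z_i)\bigr)+\bigl(\Psi(s)-\Psi(r)\bigr)=\Psi(s)-\Psi(r)$, which by Corollary~\ref{Pred.} lies in $\Pred_\u(n)$.

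For the inclusion $\subseteq$, I would pick any prefixes $x$ of $v$ and $y$ of $w$ with $|x|=|y|$, and aim to rewrite the difference $\Psi(y)-\Psi(x)$ in the form prescribed by the right-hand side of~\eqref{Pred(n).}. There exist indices $j,j'$ and a prefix $r$ of $z_j$, a prefix $s$ of $\tilde z_{j'}$ such that $x=z_0\cdots z_{j-1}r$ and $y=\tilde z_0\cdots\tilde z_{j'-1}s$. I expect the only mildly subtle step to be showing that $j$ and $j'$ can be taken equal, which is the main obstacle. This is settled by the length identity: using $|z_i|=|\tilde z_i|$, the equality $|x|=|y|$ becomes $\sum_{i<j}|z_i|+|r|=\sum_{i<j'}|z_i|+|s|$. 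If for instance $j<j'$, then $|r|-|s|=\sum_{i=j}^{j'-1}|z_i|\geq|z_j|\geq|r|$, forcing $j'=j+1$, $r=z_j$, and $s=\varepsilon$; but then $x=z_0\cdots z_j$ can be rewritten with the trailing factor $\varepsilon$ regarded as the empty prefix of $z_{j+1}$, restoring $j=j'$. The case $j>j'$ is symmetric.

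Once this reindexing is done we have $x=z_0\cdots z_{j-1}r$, $y=\tilde z_0\cdots \tilde z_{j-1}s$ with $r,s$ prefixes of $z_j,\tilde z_j$, and from $|x|=|y|$ together with $|z_i|=|\tilde z_i|$ one gets $|r|=|s|$. Finally,
\[
\Psi(y)-\Psi(x)=\sum_{i<j}\bigl(\Psi(\tilde z_i)-\Psi(z_i)\bigr)+\Psi(s)-\Psi(r)=\Psi(s)-\Psi(r)\,,
\]
which exhibits $\Psi(y)-\Psi(x)$ as an element of the set indexed by $\begin{pmatrix}z_j\\ \tilde z_j\end{pmatrix}$ on the right-hand side of~\eqref{Pred(n).}. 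Together with Corollary~\ref{Pred.}, both inclusions are established.
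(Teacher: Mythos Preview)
Your proof is correct and follows essentially the same route as the paper's: both invoke Corollary~\ref{Pred.} and use the telescoping of the Parikh differences over the block structure of the co-decomposition. Your version is in fact more careful, as you prove both inclusions explicitly and justify why the block indices of $x$ and $y$ can be taken equal, whereas the paper simply asserts this without comment.
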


\begin{proof}
With regard to the definition of $\Z_\u(n)$, we can write
\begin{equation}\label{0p.}
\begin{array}{ccccccc}
\u_{[B(n)]}\u_{[n]}^{-1}&=&z_0&z_1&z_2&\cdots&z_{h} \\
\u_{[n]}^{-1}\u_{[B(n)]}&=&\tilde{z}_0&\tilde{z}_1&\tilde{z}_2&\cdots&\tilde{z}_{h}
\end{array}
\end{equation}
where $\begin{pmatrix}z_j\\ \tilde{z}_j\end{pmatrix}\in\Z_\u(n)$ for all $j=0,1,\ldots,h$.

Now we apply formula~\eqref{Pred(n)orig.}. Let $x$ be a prefix of $\u_{[B(n)]}\u_{[n]}^{-1}$, $y$ be the prefix of $\u_{[n]}^{-1}\u_{[B(n)]}$ and $|x|=|y|$. Then, with regard to~\eqref{0p.},
$$
\begin{array}{ccccccc}
x&=&z_0&z_1&\cdots&z_{h'}&r \\
y&=&\tilde{z}_0&\tilde{z}_1&\cdots&\tilde{z}_{h'}&s
\end{array}
$$
for a certain $h'<h$, an $r$ being a prefix of $z_{h'+1}$ and an $s$ being a prefix of $\tilde{z}_{h'+1}$, $|r|=|s|$. Since $\Psi(z_j)=\Psi(\tilde{z}_j)$ for all $j=0,1,\ldots,h$, it holds $\Psi(y)-\Psi(x)=\Psi(s)-\Psi(r)$, thus~\eqref{Pred(n).} is proved.
\end{proof}

Obviously, practical usefulness of the formula~\eqref{Pred(n).} depends on whether one can obtain $\Z_\u(n)$ efficiently. In this paper we will show a solution for infinite words associated with Parry numbers; it is likely that suitable ways can be found also for other families of recurrent words.

\section{On the recurrence of Parry words}\label{recParry}

Any Parry word 
$\u$ 
is recurrent, which can be easily checked directly (and also it follows e.g. from~\cite{Du,Ca}), therefore the method of abelian co-decomposition can be applied.
The aim of this section is to determine, for any given $n\in\N$, the prefix $\u_{[B(n)]}$, introduced in Observation~\ref{B(n)}. Recall that the knowledge of $\u_{[B(n)]}$ is needed to calculate $\Z_\u(n)$, see formula~\eqref{Z(n).}.

Let us start with a trivial observation.

\begin{observation}\label{0x0Obs}
Let $\u$ be a Parry word, i.e., the fixed point of a substitution given by~\eqref{simpleParry} or~\eqref{nonsimpleParry}. If $w$ is a factor of $\u$ of the length $n\leq F_{k}=|\varphi^{k}(0)|$, then $w$ is a factor of $\varphi^{k}(0t0)$ for a certain word $t$ not containing the letter $0$ ($t$ may be the empty word).
\end{observation}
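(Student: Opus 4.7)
The plan is to exploit the block decomposition $\u = \varphi^k(u_0)\varphi^k(u_1)\varphi^k(u_2)\cdots$ induced by the fixed-point identity $\u=\varphi^k(\u)$, where $u_0u_1u_2\cdots$ is $\u$ read letter by letter. Every factor $w$ sits inside some window of consecutive such blocks, and my aim is to widen that window on both sides to the nearest occurrences of the letter $0$, which will present $w$ as a factor of $\varphi^k(0t0)$ for a suitable $t$.

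Concretely, I first locate indices $a\le b$ so that $w$ begins inside $\varphi^k(u_a)$ and ends inside $\varphi^k(u_b)$, making $w$ a factor of $\varphi^k(u_a u_{a+1}\cdots u_b)$. Then I define $a'$ to be the largest index $\le a$ with $u_{a'}=0$ and $b'$ to be the smallest index $\ge b$ satisfying $u_{b'}=0$ and $b'>a'$. Setting $t:=u_{a'+1}u_{a'+2}\cdots u_{b'-1}$, the extremality of $a'$ and $b'$ forces $t$ to contain no $0$. Hence $u_{a'}\cdots u_{b'}=0t0$, and $w$ is a factor of $\varphi^k(u_a\cdots u_b)$, which is itself a factor of $\varphi^k(u_{a'}\cdots u_{b'})=\varphi^k(0t0)$.

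The step requiring the most care will be verifying that $a'$ and $b'$ exist. For $a'$, the inequality $\alpha_0\ge1$ from~\eqref{alpha} forces $\u$ to begin with the letter $0$, so $u_0=0$ and the choice $a'=0$ is always available. For $b'$, I use that $\u$ is recurrent and $0$ occurs in $\u$, hence $0$ occurs infinitely often, so arbitrarily large indices with $u_{b'}=0$ exist. The only subtlety is the edge case in which $w$ lies entirely inside a single block $\varphi^k(u_a)$ with $u_a=0$: a naive choice could collapse $a'$ with $b'$, shrinking $0t0$ to a single $0$; one avoids this by explicitly taking $b'$ to be the next index strictly after $a$ with $u_{b'}=0$, which again exists by recurrence. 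Note that the hypothesis $n\le F_k$ plays no role in this existence argument and is presumably imposed only to keep $|t|$ bounded in the subsequent applications of this observation.
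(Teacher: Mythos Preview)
Your block-decomposition approach is the natural one and is essentially what the paper has in mind (the paper gives no proof, labelling this a ``trivial observation''). However, there is a genuine gap. When you assert that ``the extremality of $a'$ and $b'$ forces $t$ to contain no $0$'', the maximality of $a'$ only rules out zeros among $u_{a'+1},\ldots,u_a$, and the minimality of $b'$ only rules out zeros among $u_b,\ldots,u_{b'-1}$. Neither condition says anything about indices strictly between $a$ and $b$: if some $u_c=0$ with $a<c<b$, your word $t=u_{a'+1}\cdots u_{b'-1}$ would still contain that $0$.

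This is exactly where the hypothesis $n\le F_k$ enters, contrary to your final remark. If $a<c<b$ and $u_c=0$, then $w$ contains the entire block $\varphi^k(u_c)=\varphi^k(0)$ of length $F_k$, together with at least one letter from each of the distinct blocks indexed by $a$ and $b$; hence $|w|\ge F_k+2>n$, a contradiction. Without the hypothesis the statement is in fact false: for the Tribonacci word with $k=0$ one has $F_0=1$, and the factor $w=102$ (of length $3>F_0$) is not a factor of any $\varphi^0(0t0)=0t0$ with $|t|_0=0$, since the internal $0$ of $102$ cannot coincide with either endpoint of $0t0$. So $n\le F_k$ is not mere bookkeeping for later sections; it is essential to the observation itself, and your argument should invoke it to exclude zeros at positions strictly between $a$ and $b$.
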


In other words, to find all factors of $\u$ of the length $n\leq F_{k}$, it suffices to explore the factors $\varphi^{k}(0t0)$ for $t$ not containing $0$. Let us denote
$$
\mathcal{T}=\{0t0\,|\,\text{$0t0$ is a factor of $\u$},\,|t|_0=0\},
$$
and find a prefix of $\u$ containing all factors from $\mathcal{T}$.
We will deal with the simple Parry words at first.

\begin{observation}\label{0x0LemmaSim}
Let $\u$ be the fixed point of~\eqref{simpleParry} and $0t0\in\mathcal{T}$.
\begin{itemize}
\item[(i)] If $t\neq\varepsilon$ and $t$ ends with a letter $\ell\in\{1,2,\ldots,m-1\}$, then $0t0$ is a suffix of $\varphi^\ell(0)0$.
\item[(ii)] If $t=\varepsilon$, then $0t0=00$ is a suffix of $\varphi^m(0)0$. If moreover $\alpha_0\geq2$, then $00$ is also a prefix of $\u$.
\end{itemize}
\end{observation}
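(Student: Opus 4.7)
The plan is to derive both parts from a single structural lemma: for every $\ell\in\{1,\ldots,m-1\}$, every occurrence of the letter $\ell$ in $\u$ is the terminal letter of an occurrence of $\varphi^\ell(0)$; i.e., if $u_i=\ell$ then $u_{i-F_\ell+1}\cdots u_i=\varphi^\ell(0)$.

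I would prove this lemma by induction on $\ell$, using the fixed-point identity $\u=\varphi(\u)$. Since $\varphi(k)=0^{\alpha_k}(k+1)$ for $k\le m-2$ and $\varphi(m-1)=0^{\alpha_{m-1}}$, any letter $\ell\in\{1,\ldots,m-1\}$ appears only as the last letter of $\varphi(\ell-1)=0^{\alpha_{\ell-1}}\ell$; hence any occurrence $u_i=\ell$ must be the final letter of some block $\varphi(u_k)=\varphi(\ell-1)$, forcing $u_k=\ell-1$. The base case $\ell=1$ is immediate: positions $i-\alpha_0,\ldots,i$ of $\u$ spell out $\varphi(0)=\varphi^1(0)$. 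For the inductive step, applying the hypothesis at position $k$ (where $u_k=\ell-1$) yields $\varphi^{\ell-1}(0)$ as a suffix of $\u_{[k+1]}$; applying $\varphi$ to this suffix produces $\varphi^\ell(0)$ terminating exactly at position $i$ of $\u$.

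For part (i), let $i$ be the position of the terminal $\ell$ of $t$. The lemma gives $u_{i-F_\ell+1}\cdots u_i=\varphi^\ell(0)$, and this word begins with $0$ because $\alpha_0\ge 1$. Since $t$ is zero-free and occupies positions $i-|t|+1,\ldots,i$, the leading $0$ of $0t$ (at position $i-|t|$) must lie at or after position $i-F_\ell+1$; otherwise the zero at position $i-F_\ell+1$ would fall inside $t$. This yields $|0t|\le F_\ell$, so $0t$ is a suffix of $\varphi^\ell(0)$ and $0t0$ is a suffix of $\varphi^\ell(0)\cdot 0$.

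For part (ii), $t=\varepsilon$ gives $0t0=00$ directly. An easy induction (same letter-tracking idea) shows $\varphi^k(0)$ ends in the letter $k$ for $0\le k\le m-1$, so $\varphi^m(0)=\varphi(\varphi^{m-1}(0))$ terminates with $\varphi(m-1)=0^{\alpha_{m-1}}$, whose final letter is $0$ because $\alpha_{m-1}\ge 1$ is forced by $\varphi(m-1)\ne\varepsilon$; thus $\varphi^m(0)\cdot 0$ ends in $00$. The last clause is immediate: $\u$ begins with $\varphi(0)=0^{\alpha_0}1$, which starts with $00$ whenever $\alpha_0\ge 2$. The only substantive step in the entire argument is the structural lemma; everything else is length bookkeeping together with the single observation that $\varphi^\ell(0)$ begins with $0$.
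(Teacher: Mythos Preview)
Your proof is correct and follows the same approach as the paper: identify the terminal letter $\ell$ of $t$ as the last letter of an occurrence of $\varphi^\ell(0)$ in $\u$, then use the fact that $\varphi^\ell(0)$ begins with $0$ while $t$ is zero-free to conclude $|0t|\le F_\ell$. Your structural lemma makes explicit, via a clean induction on $\ell$, what the paper asserts tersely as ``$0t0$ is necessarily a suffix of a factor $\varphi^{\ell+im}(v0)0$'' followed by a reduction to $i=0$; the underlying content is the same.
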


\begin{proof}
(i) Let $t\neq\varepsilon$ and $\ell$ be its last letter. It follows from~\eqref{simpleParry} that $0t0$ is necessarily a suffix of a factor $\varphi^{\ell+im}(v0)0$ of $\u$, where $v$ is a certain factor and $i\in\N_0$. Since $\varphi^{\ell+im}(v0)0=\varphi^\ell(v'0)0$ for a certain $v'$, we can assume without loss of generality that $i=0$, thus $0t0$ is a suffix of $\varphi^{\ell}(v0)0$.
Finally, since $|\varphi^\ell(0)|_0\geq1$ (recall that $\alpha_0\geq1$) and we assume $|t|_0=0$, we have $|\varphi^\ell(0)0|_0\geq|0t0|_0$, hence $0t0$ is not longer than $\varphi^{\ell}(0)0$, in other words, $0t0$ it is a suffix of $\varphi^{\ell}(0)0$.

(ii) The statement for $t=\varepsilon$ is obvious from~\eqref{simpleParry}.
\end{proof}

\begin{remark}\label{0x0Rem}
Observation~\ref{0x0LemmaSim} implies that any $0t0\in\mathcal{T}$ is a factor of $\varphi^\ell(0)0$ for a certain $\ell$, where $\ell\in\{1,\ldots,m-1\}$ if $\alpha_0\geq2$ and $\ell\in\{1,\ldots,m-1,m\}$ if $\alpha_0=1$.
\end{remark}

In the proposition below we find an $R_{\mathrm{s}}$ such that $\varphi^{R_{\mathrm{s}}}(0)0$ is a \emph{prefix of $\u$} containing \emph{all} factors from $\mathcal{T}$.

\begin{proposition}\label{0x0sim}
Let $\u$ be the fixed point of~\eqref{simpleParry}. Let us set
\begin{equation}\label{Rsim}
R_{\mathrm{s}}=\left\{\begin{array}{ll}
m-1 & \text{if } \alpha_0\geq2, \\
m+\ell'-1, \text{where $\ell'=\min\{\ell\geq1\,|\,\alpha_{\ell}\geq1\}$} & \text{if } \alpha_0=1.
\end{array}\right.
\end{equation}
Then $\varphi^{R_{\mathrm{s}}}(0)0$ is a prefix of $\u$ such that every $0t0\in\mathcal{T}$ is a factor of $\varphi^{R_{\mathrm{s}}}(0)0$.
\end{proposition}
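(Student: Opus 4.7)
My plan is to establish two things in turn: that $\varphi^{R_{\mathrm{s}}}(0)\,0$ is a prefix of $\u$, and that every $0t0\in\mathcal{T}$ is a factor of $\varphi^{R_{\mathrm{s}}}(0)\,0$. For the prefix claim, I will identify the $(F_{R_{\mathrm{s}}}+1)$-th letter of $\varphi^{R_{\mathrm{s}}+1}(0)$, treating the two cases of~\eqref{Rsim} separately. When $\alpha_0\geq 2$ and $R_{\mathrm{s}}=m-1$, the expansion $\varphi^m(0)=\varphi^{m-1}(0^{\alpha_0}1)$ begins with $\varphi^{m-1}(0)^{\alpha_0}$, and since $\alpha_0\geq 2$ this starts with $\varphi^{m-1}(0)\cdot\varphi^{m-1}(0)$, whose $(F_{m-1}+1)$-th letter is the initial $0$ of the second copy. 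When $\alpha_0=1$ and $R_{\mathrm{s}}=m+\ell'-1$, writing $\varphi^{R_{\mathrm{s}}+1}(0)=\varphi^{R_{\mathrm{s}}}(0)\,\varphi^{R_{\mathrm{s}}}(1)$ reduces the question to the first letter of $\varphi^{R_{\mathrm{s}}}(1)$; iterating $\varphi(j)=j+1$ for $1\leq j<\ell'$ (valid because $\alpha_j=0$ there) yields $\varphi^{\ell'-1}(1)=\ell'$, so $\varphi^{R_{\mathrm{s}}}(1)=\varphi^m(\ell')$, which begins with $0$ by Observation~\ref{>=|A|}.

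For the factors claim, Observation~\ref{0x0LemmaSim} together with Remark~\ref{0x0Rem} reduces the task to showing that $\varphi^\ell(0)\,0$ is a factor of $\varphi^{R_{\mathrm{s}}}(0)\,0$ for each $\ell$ in the range indicated there. The routine cases are those in which $\varphi^\ell(0)\,0$ is itself a prefix of $\u$: this covers every $\ell\in\{1,\ldots,m-1\}$ when $\alpha_0\geq 2$, and every $\ell\in\{\ell',\ldots,m\}$ when $\alpha_0=1$, by arguments parallel to those in the prefix claim above (with $\ell$ playing the role of $R_{\mathrm{s}}$). Since $|\varphi^\ell(0)\,0|=F_\ell+1\leq F_{R_{\mathrm{s}}}+1$ in each such case, $\varphi^\ell(0)\,0$ appears as a prefix of the longer prefix $\varphi^{R_{\mathrm{s}}}(0)\,0$.

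The main obstacle is the remaining case $\alpha_0=1$ with $\ell\in\{1,\ldots,\ell'-1\}$, where $\varphi^\ell(0)=012\cdots\ell$ and $\varphi^\ell(0)\,0=012\cdots\ell\,0$ is \emph{not} a prefix of $\u$. My plan is to exploit the self-similarity $\u=\varphi^\ell(\u)$: applying $\varphi^\ell$ to the prefix $\varphi^m(0)\cdot 0$ of $\u$ (a prefix by the case $\ell=m$ already handled) yields the prefix $\varphi^{m+\ell}(0)\cdot\varphi^\ell(0)$ of $\u$. Because the last letter of $\varphi^m(0)$ is $0$, the word $\varphi^{m+\ell}(0)=\varphi^\ell(\varphi^m(0))$ ends in $\varphi^\ell(0)$, so the suffix of this prefix is $\varphi^\ell(0)\cdot\varphi^\ell(0)=012\cdots\ell\,012\cdots\ell$. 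Across the boundary between the two copies one reads off the factor $012\cdots\ell\cdot 0=\varphi^\ell(0)\,0$, whose last letter lies at position $F_{m+\ell}$ of $\u$; since $m+\ell\leq m+\ell'-1=R_{\mathrm{s}}$, this occurrence sits inside $\varphi^{R_{\mathrm{s}}}(0)\,0$. The point to verify carefully is that the boundary reading genuinely produces $\varphi^\ell(0)\,0$, which requires that $\varphi^\ell(0)$ contains its unique $0$ only at the very start — guaranteed precisely because $\ell<\ell'$.
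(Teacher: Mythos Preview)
Your proof is correct and follows essentially the same route as the paper's: both treat the two cases of~\eqref{Rsim} separately, show $\varphi^j(0)0$ is a prefix of $\u$ for $j\geq\ell'$ (respectively all $j$ when $\alpha_0\geq 2$), and for the remaining $j<\ell'$ apply $\varphi^j$ to the prefix $\varphi^m(0)0$ (which ends in $00$) to locate $\varphi^j(0)0$ inside $\varphi^{m+j}(0)0$. Two minor remarks: the last letter of your boundary occurrence sits at position $F_{m+\ell}+1$, not $F_{m+\ell}$ (harmless, since $F_{m+\ell}+1\leq F_{R_{\mathrm s}}+1$ still holds); and your final sentence about $\varphi^\ell(0)$ having a unique $0$ is unnecessary---the first letter of any $\varphi^\ell(0)$ is $0$, so the boundary reading always yields $\varphi^\ell(0)\,0$.
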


\begin{proof}
Let $\alpha_0\geq2$. The word $\u$ has the prefix $00$, and thus also $\varphi^j(00)=\varphi^j(0)0\cdots$ for all $j\in\N$. Therefore, $\varphi^{\ell}(0)0$ is a prefix of $\u$ for all $\ell=1,2,\ldots,m-1=R_{\mathrm{s}}$. The statement then follows from Remark~\ref{0x0Rem}.

Let $\alpha_0=1$. The word $\u$ has the prefix $01$, thus also $\varphi^j(01)$ for all $j\in\N$. Note that $j\geq \ell'\Rightarrow \varphi^j(1)=0\cdots$, hence for all $j\geq \ell'$, the word $\varphi^j(0)0$ is a prefix of $\u$, in particular for $j=\ell',\ell'+1,\ldots,m$.

The prefix $\varphi^{m}(0)0$ of $\u$ ends with $00$. Therefore
$$
\varphi^j(\varphi^m(0)0)=\varphi^j(\varphi^m(0))\varphi^j(0)=\varphi^j(\cdots 0)0\cdots=\underbrace{\cdots\varphi^j(0)0}_{\varphi^{m+j}(0)0}\cdots\,,
$$
hence for all $j\in\N$, the word $\varphi^{m+j}(0)0$ is a prefix of $\u$ and $\varphi^j(0)0$ is its factor. From now on let us focus on $j=1,2,\ldots,\ell'-1$. All the corresponding prefixes $\varphi^{m+j}(0)0$ of $\u$ are factors of the longest prefix $\varphi^{m+\ell'-1}(0)0$, therefore $\varphi^{m+\ell'-1}(0)0$ is a prefix of $\u$ containing $\varphi^j(0)0$ as a factor for every $j=1,2,\ldots,\ell'-1$.

At the same time, $\varphi^{m+\ell'-1}(0)0$ is longer than $\varphi^j(0)0$ for all $j=\ell',\ldots,m$, and thus contains also $\varphi^j(0)0$ for all $j=\ell',\ldots,m$ (see above).
To sum up, $\varphi^{m+\ell'-1}(0)0$ is a prefix of $\u$, containing $\varphi^j(0)0$ as its factor for all $j=1,\ldots,m$. The statement then follows from Remark~\ref{0x0Rem}.
\end{proof}

Now we repeat the calculation for non-simple Parry words.

\begin{observation}\label{0x0LemmaNonSim}
Let $\u$ be the fixed point of~\eqref{nonsimpleParry} and $h'=\min\{\ell\geq m\,|\,\alpha_{\ell}\geq1\}$. For any $0t0\in\mathcal{T}$, it holds:
\begin{itemize}
\item[(i)] If $t=\varepsilon$, then $0t0=00$ is a prefix of $\u$.
\item[(ii)] If $t\neq\varepsilon$ and its last letter $\ell$ satisfies $\ell<m$ or $\ell>h'$, then $0t0$ is a suffix of $\varphi^\ell(0)0$ and $\varphi^\ell(0)0$ is a factor of $\u$.
\item[(iii)] If $t\neq\varepsilon$ and its last letter $\ell$ satisfies $m\leq \ell\leq h'$, then $0t0$ is a suffix of $\varphi^\ell(0)0$ and $\varphi^\ell(0)0$ is a factor of $\u$, or $0t0$ is a suffix of $\varphi^{\ell+p}(0)0$ and $\varphi^{\ell+p}(0)0$ is a factor of $\u$.
\end{itemize}
\end{observation}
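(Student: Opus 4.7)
The proof closely parallels that of Observation~\ref{0x0LemmaSim}, with extra bookkeeping required by the cycle $m \mapsto m+1 \mapsto \cdots \mapsto m+p-1 \mapsto m$ in~\eqref{nonsimpleParry}. I would treat the three cases of the statement separately.

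For part (i), note that in~\eqref{nonsimpleParry} every image $\varphi(\ell)$ ends in a nonzero letter (either $\ell+1$ or $m$), so $00$ cannot appear at the junction of two consecutive $\varphi$-images and must lie entirely inside a single $\varphi(\ell)=0^{\alpha_\ell}(\ldots)$. Hence $00\in\mathcal{T}$ forces $\alpha_\ell\geq 2$ for some $\ell$, which by the Parry bound $\alpha_\ell\leq\alpha_0$ in~\eqref{alpha} gives $\alpha_0\geq 2$; consequently $\u$ begins with $0^{\alpha_0}$ and $00$ is a prefix of $\u$.

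For parts (ii) and (iii), the core structural fact I would establish is: every occurrence of a letter $\ell\geq 1$ in $\u$ is the terminal letter of an occurrence of $\varphi^k(0)$ as a factor of $\u$, where $k$ is determined by how $\ell$ is traced back through $\varphi$. For $\ell\in\{1,\ldots,m-1\}$ the only preimage of $\ell$ under $\varphi$ is $\ell-1$ (only $\varphi(\ell-1)=0^{\alpha_{\ell-1}}\ell$ ends in $\ell$), so iterated unfolding traces $\ell$ back to $0$ in exactly $\ell$ steps, giving $k=\ell$. For $\ell\in\{m,\ldots,m+p-1\}$ the letter $m$ has two preimages, namely $m-1$ and $m+p-1$, so the unfolding yields either $k=\ell$ (the direct descent to $0$) or $k=\ell+p$ (after one trip around the cycle). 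Since the specific occurrence of $\ell$ at the end of $t$ in $0t0$ is followed by $0$, the corresponding $\varphi^k(0)\cdot 0$ is a factor of $\u$.

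To conclude that $0t0$ is a suffix of $\varphi^k(0)\cdot 0$, I would reuse the counting argument of Observation~\ref{0x0LemmaSim}: $\varphi^k(0)$ starts with $0$ (since $\varphi(0)=0^{\alpha_0}1$ starts with $0$ and iteration preserves this), so $\varphi^k(0)$ contains at least one $0$; if $0t0$ were strictly longer than $\varphi^k(0)\cdot 0$, the last $|\varphi^k(0)|$ letters of $t$ would coincide with $\varphi^k(0)$, placing a $0$ inside $t$ and contradicting $|t|_0=0$. The main obstacle I foresee, and the reason for splitting parts (ii) and (iii) at the threshold $h'$, is the case analysis for when the shorter witness $\varphi^\ell(0)\cdot 0$ is actually a factor of $\u$ versus when one has to pass to the longer $\varphi^{\ell+p}(0)\cdot 0$. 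The role of $h'=\min\{\ell\geq m:\alpha_\ell\geq 1\}$ is precisely to mark the threshold in the cycle after which a $\varphi^\ell$-block $\varphi^\ell(0)$ in $\u$ is guaranteed to be followed by a letter whose $\varphi^\ell$-image begins with $0$: for $\ell<m$ or $\ell>h'$ this is automatic and $\varphi^\ell(0)\cdot 0$ works, whereas for $m\leq\ell\leq h'$ the direct witness may fail and one must allow the cyclic detour $\varphi^{\ell+p}(0)\cdot 0$.
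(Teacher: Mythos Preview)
Your treatment of part~(i) is fine and matches the paper. The gap is in parts~(ii) and~(iii), specifically in the sentence ``the unfolding yields either $k=\ell$ (the direct descent to $0$) or $k=\ell+p$ (after one trip around the cycle).'' This is false: tracing back an occurrence of $\ell\geq m$ through preimages can go around the cycle any number of times, so the exponent you obtain is $k=\ell+ip$ for some $i\geq0$ that may be arbitrarily large. For a concrete instance, take $m=2$, $p=2$ with $\varphi:0\mapsto001,\ 1\mapsto2,\ 2\mapsto003,\ 3\mapsto02$; the terminal letter of $\varphi^{2+2i}(0)$ is $2$ for every $i\geq0$, and the corresponding trace-back gives $k=2+2i$. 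Your zero-count argument correctly shows $0t0$ is a suffix of $\varphi^k(0)0$ for that particular $k$, but it does not by itself bring $k$ down to $\ell$ or $\ell+p$.

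What is missing is precisely the reduction step that the paper carries out, and this is also where $h'$ actually enters. For $\ell>h'$ the paper observes that $\varphi^{\ell+ip}(0)$ ends in $\varphi^{\ell-h'}(h')$, and since $\alpha_{h'}\geq1$ this suffix already contains a $0$; hence $|\varphi^{\ell-h'}(h')0|_0\geq2=|0t0|_0$, forcing $0t0$ to be a suffix of $\varphi^{\ell-h'}(h')0$ and therefore of $\varphi^\ell(0)0$. For $m\leq\ell\leq h'$ the same idea works one level up: $\varphi^{\ell+ip}(0)$ ends in $\varphi^p(\ell)$, and a full trip around the cycle guarantees $|\varphi^p(\ell)|_0\geq1$, which reduces $i$ to at most $1$. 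Your description of $h'$ as the threshold ``after which a $\varphi^\ell$-block is guaranteed to be followed by a letter whose $\varphi^\ell$-image begins with $0$'' is not the mechanism at work; $h'$ is the point in the cycle where the first $0$ is produced, and that is what makes the suffix $\varphi^{\ell-h'}(h')$ (respectively $\varphi^p(\ell)$) long enough in zero-count to absorb $0t0$.
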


\begin{proof}
At first note that the definition of $h'$ is correct with regard to~\eqref{alpha}.

(i) If $00\in\mathcal{T}$, there exists an $\ell\in\A$ such that $\alpha_\ell\geq2$. Since $\alpha_\ell\leq \alpha_0$ for all $\ell\in\A$ due to~\eqref{alpha}, we have $\alpha_0\geq2$, hence $00$ is a prefix of $\u$.

(ii), (iii)
Considering~\eqref{nonsimpleParry}, $0t0$ is necessarily a suffix of a factor $\varphi^j(v0)0$ of $\u$, where
$$
j=\left\{\begin{array}{cl}
\ell & \text{for $\ell<m$}, \\
\ell+ip \text{ with a certain $i\in\N_0$} & \text{for $\ell\geq m$}.
\end{array}\right.
$$
Since $|\varphi^j(0)|_0\geq1$ (recall that $\alpha_0\geq1$) and we assume $|t|_0=0$, we have $|\varphi^j(0)0|_0\geq|0t0|_0$, hence $0t0$ is not longer than $\varphi^{j}(0)0$, in other words, $0t0$ is a suffix of $\varphi^{j}(0)0$ and one can consider $v=\varepsilon$.
In the rest of the proof we explain why we can assume $i=0$ for $\ell>h'$ and $i\leq1$ for $m\leq \ell\leq h'$.

Let $\ell>h'$ and $0t0$ be a suffix of $\varphi^{\ell+ip}(0)0$ for an $i>0$. Our goal is to show
that $0t0$ is a suffix of $\varphi^{\ell}(0)0$ and at the same time, $\varphi^{\ell}(0)0$ is a factor of $\u$.
Applying the substitution~\eqref{nonsimpleParry}, we find that $0t0$ is a suffix of
$$
\varphi^{\ell+ip}(0)0=\varphi^{\ell-h'}(\varphi^{ip}(\varphi^{h'}(0)))0=\varphi^{\ell-h'}(\varphi^{ip}(\cdots h'))0=\varphi^{\ell-h'}(\cdots h')0=\cdots\varphi^{\ell-h'}(h')0\,.
$$
It holds $|0t0|_0=2$ (we assume $|t|_0=0$) and $|\varphi^{\ell-h'}(h')0|_0\geq2$ (because the definition of $h'$ implies $|\varphi^{\ell-h'}(h')|_0\geq1$), hence $|0t0|_0\leq|\varphi^{\ell-h'}(h')0|_0$. Consequently, $0t0$ is a suffix of $\varphi^{\ell-h'}(h')0$.
Since $\varphi^{h'}(0)=\cdots h'$, the word
$0t0$
is a suffix of $\varphi^{\ell-h'}(\varphi^{h'}(0))0=\varphi^\ell(0)0$.
Finally, $\varphi^\ell(0)0$ is a prefix of $\u$ due to $\ell\geq h'$.

Let $m\leq \ell\leq h'$ and $0t0$ be a suffix of $\varphi^{\ell+ip}(0)0$ for an $i>1$. We will show that $i$ can be set to $1$.
It holds
$$
\varphi^{\ell+ip}(0)0=\varphi^{p}(\varphi^{(i-1)p}(\varphi^{\ell}(0)))0=\varphi^{p}(\varphi^{(i-1)p}(\cdots \ell))0=\varphi^{p}(\cdots \ell)0=\cdots\varphi^{p}(\ell)0\,.
$$
The assumption $\ell\geq m$ together with conditions~\eqref{alpha} imply $|\varphi^{p}(\ell)0|_0\geq2$, and since at the same time $|0t0|_0=2$, we have $|0t0|_0\leq|\varphi^{p}(\ell)0|_0$, hence $0t0$ is a suffix of $\varphi^{p}(\ell)0$.
The letter $\ell$ is a suffix of $\varphi^{\ell}(0)$, thus $0t0$ is a suffix of $\varphi^{\ell+p}(0)0$. Finally, the word $\varphi^{\ell+p}(0)0$ is a prefix, and thus a factor, of $\u$ due to $\ell+p\geq m+p$ and, e.g., Observation~\ref{>=|A|}.
\end{proof}

\begin{remark}\label{0x0RemNs}
Observation~\ref{0x0LemmaNonSim} implies that for any $0t0\in\mathcal{T}$ there exists a $j\in\{1,2,\ldots,m+p-1\}\cup\{m+p,m+1+p,\ldots,h'+p\}=\{1,2,\ldots,h'+p\}$ such that $0t0$ is a factor of $\varphi^j(0)0$ and $\varphi^j(0)0$ is a factor of $\u$.
\end{remark}

\begin{proposition}\label{0x0nonsim}
Let $\u$ be the fixed point of~\eqref{nonsimpleParry} and $h'=\min\{\ell\geq m\,|\,\alpha_{\ell}\geq1\}$.
Let us set
\begin{equation}\label{Rnonsim}
R_{\mathrm{ns}}=\left\{\begin{array}{ll}
h'+p & \text{if } \alpha_0\geq2, \\
h'+m+p-1 & \text{if } \alpha_0=1.
\end{array}\right.
\end{equation}
Then $\varphi^{R_{\mathrm{ns}}}(0)0$ is a prefix of $\u$ such that every $0t0\in\mathcal{T}$ is a factor of $\varphi^{R_{\mathrm{ns}}}(0)0$.
\end{proposition}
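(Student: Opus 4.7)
The plan is to parallel the proof of Proposition~\ref{0x0sim}. By Remark~\ref{0x0RemNs}, every $0t0 \in \mathcal{T}$ is a factor of some $\varphi^j(0)0$ with $j \in \{1,\dots,h'+p\}$ and $\varphi^j(0)0$ itself a factor of $\u$. It thus suffices to establish (a) $\varphi^{R_{\mathrm{ns}}}(0)0$ is a prefix of $\u$, and (b) each such $\varphi^j(0)0$ appears as a factor of $\varphi^{R_{\mathrm{ns}}}(0)0$.

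When $\alpha_0 \geq 2$, the prefix $00$ of $\u$ forces $\varphi^j(0)0$ to be a prefix of $\u$ for every $j\geq 0$; since $R_{\mathrm{ns}} = h'+p$, every relevant $\varphi^j(0)0$ is a prefix of $\varphi^{R_{\mathrm{ns}}}(0)0$, settling both (a) and (b) at once.

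When $\alpha_0 = 1$, I set $\ell^* := \min\{\ell\geq 1 : \alpha_\ell \geq 1\}$; condition~\eqref{alpha} together with the definition of $h'$ gives $\ell^*\leq h'\leq m+p-1$. The chain $\varphi(1)=2,\varphi(2)=3,\dots,\varphi(\ell^*-1)=\ell^*$ yields $\varphi^{\ell^*}(1)=\varphi(\ell^*)=0(\ell^*+1)$, so iterating $\varphi$ shows $\varphi^j(1)$ begins with $0$ for every $j\geq\ell^*$. Hence $\varphi^j(0)0$ is a prefix of $\u$ for every $j\geq\ell^*$; since $R_{\mathrm{ns}}=h'+m+p-1\geq\ell^*$, this establishes (a) and also handles (b) for all $j\in\{\ell^*,\dots,h'+p\}$ (those shorter prefixes sit inside $\varphi^{R_{\mathrm{ns}}}(0)0$ because $R_{\mathrm{ns}}\geq h'+p\geq j$).

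The delicate range is $j\in\{1,\dots,\ell^*-1\}$, where the trick of Proposition~\ref{0x0sim} is unavailable: $\alpha_\ell\leq 1$ for every $\ell$ prevents $00$ from being a factor of $\u$, so no iterate $\varphi^k(0)0$ ends in $00$. My substitute is desubstitution. Assuming $\varphi^j(0)0$ is a factor of $\u$, the identity $\u=\varphi^j(\u)$ yields a consecutive pair $0\ell'$ in $\u$ with $\varphi^j(\ell')$ beginning with $0$ (otherwise no occurrence of $\varphi^j(0)$ could be followed by $0$). Every such $0\ell'$ is already a factor of $\varphi^{m+p}(0)$, because $\varphi^{m+p-1}(0)$ exhausts the alphabet $\{0,1,\dots,m+p-1\}$---each power introduces the next letter via the chain $0\to 1\to 2\to\cdots\to m+p-1$---and one further application of $\varphi$ realizes every $0\ell'$-factor of $\u$ inside some image $\varphi(\ell)$ with $\alpha_\ell\geq 1$. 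Since $R_{\mathrm{ns}}-j\geq h'+m+p-\ell^*\geq m+p$, the prefix $\varphi^{m+p}(0)$ sits inside $\varphi^{R_{\mathrm{ns}}-j}(0)$; applying $\varphi^j$ to the $0\ell'$-occurrence yields $\varphi^j(0\ell')=\varphi^j(0)\cdot 0\cdots$ as a factor of $\varphi^{R_{\mathrm{ns}}}(0)$, hence $\varphi^j(0)0$ as a factor of $\varphi^{R_{\mathrm{ns}}}(0)0$. The principal obstacle is precisely this alphabet-exhaustion and $0\ell'$-factor bookkeeping, which is what pins down the exact value $R_{\mathrm{ns}}=h'+m+p-1$.
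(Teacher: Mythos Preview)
Your argument is correct and follows essentially the same strategy as the paper: reduce via Remark~\ref{0x0RemNs}, handle $\alpha_0\geq 2$ by the prefix $00$, and for $\alpha_0=1$ split into a ``prefix range'' and a ``desubstitution range.'' The only differences are bookkeeping: you use the sharper threshold $\ell^*=\min\{\ell\geq 1:\alpha_\ell\geq 1\}$ where the paper uses $h'$, and you locate the pair $0\ell'$ inside $\varphi^{m+p}(0)$ via alphabet exhaustion, whereas the paper traces $0\ell$ back through $\hat\ell=\ell-1$ as a suffix of $\varphi^{\hat\ell}(0)$; both routes yield the same bound $j+1+(m+p-1)\leq R_{\mathrm{ns}}$.
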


\begin{proof}
With regard to Remark~\ref{0x0RemNs}, it suffices to 
prove that $\varphi^{R_{\mathrm{ns}}}(0)0$ is a prefix of $\u$ and to
verify the implication ($j\leq h'+p$ and $\varphi^j(0)0$ is a factor of $\u$) $\Rightarrow$ $\varphi^j(0)0$ is a factor of $\varphi^{R_{\mathrm{ns}}}(0)0$.

Let $\alpha_0\geq2$. The word $\u$ has the prefix $00$, and thus also $\varphi^j(00)=\varphi^j(0)0\cdots$ for all $j\in\N$. Consequently, $\varphi^{R_{\mathrm{ns}}}(0)0$ for $R_{\mathrm{ns}}=h'+p$ is a prefix of $\u$ which trivially contains all the required shorter prefixes $\varphi^j(0)0$ for $j\leq h'+p$.

Let $\alpha_0=1$. The word $\u$ has the prefix $01$, and thus also $\varphi^j(01)$ for all $j\in\N$. Note that $j\geq h'\Rightarrow \varphi^j(1)=0\cdots$, hence $\varphi^j(0)0$ is a prefix of $\u$ for all $j\geq h'$. Since $R_{\mathrm{ns}}=h'+m+p-1\geq h'$, the word $\varphi^{R_{\mathrm{ns}}}(0)0$ is a prefix of $\u$, and $\varphi^j(0)0$ is a prefix of $\varphi^{R_{\mathrm{ns}}}(0)0$ for all $j=h',h'+1,\ldots,h'+p$.

Let us proceed to $j<h'$. Generally, if $\varphi^{j}(0)0$ is a factor of $\u$, then there is an $\ell\in\A$ such that $\varphi^j(0)0$ is a prefix of $\varphi^j(0\ell)$. We see from~\eqref{nonsimpleParry} that $0\ell$ is a suffix of $\varphi(\hat{\ell})$ (where $\hat{\ell}=\ell-1$ for $\ell\neq m$ and $\hat{\ell}\in\{m-1,m+p-1\}$ for $\ell=m$), hence $\varphi^j(0)0$ is a factor of $\varphi^{j+1}(\hat{\ell})$. Since $\hat{\ell}$ is a suffix of $\varphi^{\hat{\ell}}(0)$, we conclude that $\varphi^j(0)0$ is a factor of $\varphi^{j+1+\hat{\ell}}(0)$. Finally, $\hat{\ell}\in\A$ implies $\hat{\ell}\leq m+p-1$, hence $j+1+\hat{\ell}\leq h'-1+1+m+p-1=R_{\mathrm{ns}}$, therefore $0t0$ is a factor of $\varphi^{R_{\mathrm{ns}}}(0)0$.
\end{proof}

In the proposition below we find the formula for $\u_{[B(n)]}$, valid for any Parry word.

\begin{proposition}\label{vsechny}
Let $\u$ be a Parry word and
\begin{equation}\label{R}
R=\left\{\begin{array}{cl}
R_{\mathrm{s}} \; \text{ from~\eqref{Rsim}} & \text{if $\u$ is a fixed point of~\eqref{simpleParry}}, \\
R_{\mathrm{ns}} \; \text{ from~\eqref{Rnonsim}} & \text{if $\u$ is a fixed point of~\eqref{nonsimpleParry}}.
\end{array}\right.
\end{equation}
For any number $n$ we can set
\begin{equation}\label{B(n)Parry}
\u_{[B(n)]}:=\varphi^{k+R}(0)\u_{[n]}\,,
\end{equation}
where
$k$ is any integer such that $n\leq F_{k}$.
\end{proposition}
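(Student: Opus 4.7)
The plan is to verify separately the two properties of $\u_{[B(n)]}$ listed in Observation~\ref{B(n)}. The assertion that $\u_{[B(n)]}$ begins and ends with $\u_{[n]}$ is almost immediate: it ends with $\u_{[n]}$ by the very form~\eqref{B(n)Parry}, and since $\varphi^{k+R}(0)$ is a prefix of $\u$ of length $F_{k+R}\ge F_k\ge n$, its first $n$ letters must coincide with $\u_{[n]}$. Moreover, $\varphi^{k+R}(0)\u_{[n]}$ is itself a prefix of $\u$: Propositions~\ref{0x0sim} and~\ref{0x0nonsim} give that $\varphi^R(0)0$ is a prefix of $\u$, so applying $\varphi^k$ shows $\varphi^{k+R}(0)\varphi^k(0)$ is a prefix of $\u$, and $\u_{[n]}$ is a prefix of $\varphi^k(0)$ because $n\le F_k=|\varphi^k(0)|$.

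The core of the argument is to show that every factor $w$ of $\u$ with $|w|=n$ already occurs inside $\varphi^{k+R}(0)\u_{[n]}$. The strategy is to chain together the results of Section~\ref{recParry}: by Observation~\ref{0x0Obs}, $w$ is a factor of $\varphi^k(0t0)$ for some $0t0\in\mathcal{T}$; by Propositions~\ref{0x0sim} and~\ref{0x0nonsim}, $0t0$ itself is a factor of $\varphi^R(0)0$. Applying $\varphi^k$ transfers the second occurrence, so that $\varphi^k(0t0)$ sits inside $\varphi^{k+R}(0)\varphi^k(0)$, a prefix of $\u$ of length $F_{k+R}+F_k$. I would then split according to whether the occurrence of $0t0$ inside $\varphi^R(0)0$ uses the appended final $0$ or not. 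If it does not, then $\varphi^k(0t0)$ lies entirely inside $\varphi^{k+R}(0)$, and the conclusion is automatic; if it does, then $\varphi^k(0t0)=\varphi^k(0t)\varphi^k(0)$ sits as a suffix of $\varphi^{k+R}(0)\varphi^k(0)$, and the only thing to verify is that the $n$-letter window containing $w$ cannot slip past the first $F_{k+R}+n$ letters.

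The delicate subcase, and the main obstacle, is precisely when $w$ lies strictly to the right of position $F_{k+R}+n$, i.e., entirely inside the trailing block $\varphi^k(0)$. The key observation that dissolves it is that $\varphi^R(0)$ begins with the letter $0$ (because $\alpha_0\ge 1$), so $\varphi^{k+R}(0)=\varphi^k(\varphi^R(0))$ begins with $\varphi^k(0)$. This self-similarity makes the trailing copy of $\varphi^k(0)$ in $\varphi^{k+R}(0)\varphi^k(0)$ agree letter by letter with the leading $\varphi^k(0)$-block of $\varphi^{k+R}(0)$, so any such $w$ also occurs at the matching positions inside $\varphi^{k+R}(0)$, hence comfortably inside $\varphi^{k+R}(0)\u_{[n]}$. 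Once this subcase is handled, all the other cases reduce to either $w$ being contained in $\varphi^{k+R}(0)$ or $w$ fitting directly in the first $F_{k+R}+n$ letters of $\varphi^{k+R}(0)\varphi^k(0)$, completing the verification.
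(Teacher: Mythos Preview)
Your proposal is correct and follows essentially the same route as the paper's own proof. Both arguments chain Observation~\ref{0x0Obs} with Propositions~\ref{0x0sim}/\ref{0x0nonsim}, apply $\varphi^k$ to obtain that $\varphi^{k+R}(0)\varphi^k(0)$ is a prefix of $\u$ containing every length-$n$ factor, and then use the self-similarity that $\varphi^{k+R}(0)$ begins with $\varphi^k(0)$ to argue that the trailing block beyond $\u_{[n]}$ is redundant; your explicit split on whether $0t0$ uses the appended final $0$ is a minor organizational choice, not a different idea.
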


\begin{proof}
We shall show that $\varphi^{k+R}(0)\u_{[n]}$ is a prefix of $\u$ containing all factors of $\u$ of the length $n$.

Due to Observation~\ref{0x0Obs}, any factor of $\u$ of the length $n\leq F_{k}$ is a factor of $\varphi^{k}(0t0)$ for a certain $0t0\in\mathcal{T}$.
According to Propositions~\ref{0x0sim} or \ref{0x0nonsim}, every $0t0\in\mathcal{T}$ is a factor of $\varphi^{R}(0)0$, and $\varphi^{R}(0)0$ is a prefix of $\u$.
Putting these facts together, we see that the the word $\varphi^k(\varphi^R(0)0)=\varphi^{k+R}(0)\varphi^k(0)$ is a prefix of $\u$ containing all factors of $\u$ of the length $n$.
Since $\varphi^{k+R}(0)\varphi^k(0)$ begins (and ends) with $\varphi^{k}(0)$ and it holds $|\varphi^{k}(0)|\geq n$, we can write
$$
\varphi^{k+R}(0)\varphi^k(0)=\varphi^{k}(0)\cdots\cdots\varphi^k(0)=\varphi^k(0)\cdots\cdots\u_{[n]}\left(\u_{[n]}^{-1}\varphi^{k}(0)\right).
$$
Now we note that
any factor of the length $n$ of this word which 
at least partially overlaps with the suffix $\u_{[n]}^{-1}\varphi^{k}(0)$ is fully contained in the suffix $\varphi^k(0)$ (it follows from $|w|\leq n$), and, therefore, it is fully contained also in the prefix $\varphi^k(0)$.
Consequently, the suffix $\u_{[n]}^{-1}\varphi^{k}(0)$ is redundant and can be left out.
To sum up,
$\varphi^{k+R}(0)\u_{[n]}$ is a prefix of $\u$ ending with $\u_{[n]}$ and containing all factors of $\u$ of the length $n$.
\end{proof}


\section{Properties of the abelian co-decomposition applied to Parry words}

Let $\u$ denote a Parry word, i.e., $\u=\lim_{k\to\infty}\varphi^k(0)$, where $\varphi$ is given by~\eqref{simpleParry} or by~\eqref{nonsimpleParry}, and let $\langle n\rangle_F$ be the corresponding normal $F$-representation of $n\in\N_0$. 
In this section we derive several relations useful for calculating the sets $\Z_\u(n)$ for $n\in\N$ that result from
the relation between the structure of $\u_{[n]}$ and $\langle n\rangle_F$, described in Proposition~\ref{struktura pref. u}.

\begin{proposition}\label{NMn}
Let the normal $F$-representation of an $n\in\N$ take the form
$$
\langle n\rangle_F=(d_{k+K-1},d_{k+K-2},\ldots,d_{k},d_{k-1},\ldots,d_1,d_0)\,,
$$
and let $Q$ and $q$ be the numbers given by the normal $F$-representations
$$
\langle Q\rangle_F=(d_{k+K-1},d_{k+K-2},\ldots,d_{k}),\qquad \langle q\rangle_F=(d_{k-1},d_{k-2},\ldots,d_1,d_0)\,.
$$
If a $\Z_\u(Q)$ has the property
\begin{equation}\label{podm.}
\text{$\varphi^k(\tilde{z})$ has the prefix $\u_{[q]}$ for all $\begin{pmatrix}z\\ \tilde{z}\end{pmatrix}\in\Z_\u(Q)$}\,,
\end{equation}
then
\begin{equation}\label{Z NMn}
\Z_\u(n)=\bigcup_{\begin{pmatrix}z\\ \tilde{z}\end{pmatrix}\in\Z_\u(Q)}\Codec\begin{pmatrix}\varphi^k(z)\\ \u_{[q]}^{-1}\varphi^k(\tilde{z})\u_{[q]}\end{pmatrix}\,.
\end{equation}
\end{proposition}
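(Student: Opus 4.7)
The plan is to make a clever choice of the prefix $\u_{[B(n)]}$ so that it factors compatibly with the given co-decomposition of $Q$. By Proposition~\ref{struktura pref. u}, the splitting of the normal $F$-representation of $n$ yields $\u_{[n]}=\varphi^k(\u_{[Q]})\u_{[q]}$. I would invoke Proposition~\ref{vsechny} on $Q$ (legitimate since $\langle Q\rangle_F$ has $K$ digits, so $Q<F_K$) to set $\u_{[B(Q)]}:=\varphi^{K+R}(0)\u_{[Q]}$, and then define
$$\u_{[B(n)]}:=\varphi^k(\u_{[B(Q)]})\u_{[q]}=\varphi^{k+K+R}(0)\u_{[n]},$$
which is a legal realisation of $\u_{[B(n)]}$ via Proposition~\ref{vsechny} applied with exponent $k+K$ (valid because $n<F_{k+K}$).

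Next I would rewrite the two tail words appearing in~\eqref{Z(n).}. Writing $\Z_\u(Q)=\{(z_j,\tilde z_j)\}_{j=0}^{h}$ so that $\u_{[B(Q)]}\u_{[Q]}^{-1}=z_0z_1\cdots z_h$ and $\u_{[Q]}^{-1}\u_{[B(Q)]}=\tilde z_0\tilde z_1\cdots\tilde z_h$, and using that the morphism $\varphi^k$ distributes over pseudo-inverses (which is legitimate because $\u_{[Q]}$ is simultaneously a prefix and a suffix of $\u_{[B(Q)]}$), a direct computation from the choice above gives
$$\u_{[B(n)]}\u_{[n]}^{-1}=\varphi^k(z_0)\varphi^k(z_1)\cdots\varphi^k(z_h),\qquad \u_{[n]}^{-1}\u_{[B(n)]}=\u_{[q]}^{-1}\varphi^k(\tilde z_0)\varphi^k(\tilde z_1)\cdots\varphi^k(\tilde z_h)\u_{[q]}.$$

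The hypothesis~\eqref{podm.} enters here: since $\u_{[q]}$ is a prefix of every $\varphi^k(\tilde z_j)$, cancelling pairs $\u_{[q]}\u_{[q]}^{-1}$ may be inserted between consecutive factors to obtain
$$\u_{[n]}^{-1}\u_{[B(n)]}=\bigl(\u_{[q]}^{-1}\varphi^k(\tilde z_0)\u_{[q]}\bigr)\bigl(\u_{[q]}^{-1}\varphi^k(\tilde z_1)\u_{[q]}\bigr)\cdots\bigl(\u_{[q]}^{-1}\varphi^k(\tilde z_h)\u_{[q]}\bigr).$$
Because $\Psi(z_j)=\Psi(\tilde z_j)$ and the pseudo-conjugation $\u_{[q]}^{-1}(\cdot)\u_{[q]}$ preserves the Parikh vector, the pairs $\bigl(\varphi^k(z_j),\u_{[q]}^{-1}\varphi^k(\tilde z_j)\u_{[q]}\bigr)$ constitute an abelian co-decomposition of $(\u_{[B(n)]}\u_{[n]}^{-1},\u_{[n]}^{-1}\u_{[B(n)]})$; refining each such pair by its own $\Codec$ then assembles exactly the right-hand side of~\eqref{Z NMn}. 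The main technical obstacle will be the careful bookkeeping of the pseudo-inverse notation---specifically, justifying that $\varphi^k$ commutes with the operations $w\mapsto\u_{[Q]}^{-1}w$ and $w\mapsto w\u_{[Q]}^{-1}$, and that the insertion of $\u_{[q]}\u_{[q]}^{-1}$ in the third step is well-defined, the latter being precisely what hypothesis~\eqref{podm.} guarantees.
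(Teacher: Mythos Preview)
Your proposal is correct and follows essentially the same route as the paper's proof: both choose $\u_{[B(n)]}=\varphi^{k+K+R}(0)\u_{[n]}$, pull the co-decomposition of $\begin{pmatrix}\varphi^{K+R}(0)\\ \u_{[Q]}^{-1}\varphi^{K+R}(0)\u_{[Q]}\end{pmatrix}$ through $\varphi^k$, and then use hypothesis~\eqref{podm.} to slide the $\u_{[q]}$'s into place. Your write-up is in fact slightly more explicit than the paper's about why the pseudo-inverse manipulations are legitimate.
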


\begin{proof}
It follows from the assumptions that $n\leq F_{k+K}$, $Q\leq F_{K}$, and with regard to Proposition~\ref{struktura pref. u} also $\u_{[n]}=\varphi^{k}(\u_{[Q]})\u_{[q]}$.

The set $\Z_\u(Q)$ is defined as an abelian co-decomposition of $\begin{pmatrix}\u_{[B(Q)]}\u_{[Q]}^{-1}\\ \u_{[Q]}^{-1}\u_{[B(Q)]}\end{pmatrix}$ (see~\eqref{Z(n).}). Since $Q\leq F_{K}$, we have from Proposition~\ref{vsechny} the formula $\u_{[B(Q)]}=\varphi^{R+K}(0)\u_{[Q]}$. Therefore we can decompose
\begin{equation*}
\begin{array}{ccccccc}
\varphi^{K+R}(0)&=&z_1&z_2&z_3&\cdots&z_{h} \\
\u_{[Q]}^{-1}\varphi^{K+R}(0)\u_{[Q]}&=&\tilde{z}_1&\tilde{z}_2&\tilde{z}_3&\cdots&\tilde{z}_{h}
\end{array}
\end{equation*}
where $\begin{pmatrix}z_j\\ \tilde{z}_j\end{pmatrix}\in\Z_\u(Q)$ for all $j=1,\ldots,h$.
Hence
$$
\varphi^{k+K+R}(0)=\varphi^k(z_1z_2z_3\cdots z_{h})\,,
$$
and, with regard to the abovementionned equality $\u_{[n]}=\varphi^{k}(\u_{[Q]})\u_{[q]}$,
\begin{multline*}
\u_{[n]}^{-1}\varphi^{k+K+R}(0)\u_{[n]}=\u_{[q]}^{-1}\varphi^{k}(\u_{[Q]})^{-1}\varphi^{k+K+R}(0)\varphi^{k}(\u_{[Q]})\u_{[q]}=\\
=\u_{[q]}^{-1}\varphi^{k}(\u_{[Q]}^{-1}\varphi^{K+R}(0)\u_{[Q]})\u_{[q]}=\u_{[q]}^{-1}\varphi^k(\tilde{z}_1\tilde{z}_2\tilde{z}_3\cdots\tilde{z}_{h})\u_{[q]}\,.
\end{multline*}
Recall that $\varphi^k(\tilde{z_j})$ have the prefix $\u_{[q]}$ for all $j$ due to the assumptions, thus we can write
\begin{equation*}
\begin{array}{cccccc}
\varphi^{k+K+R}(0)&=&\varphi^k(z_1)&\varphi^k(z_2)&\cdots&\varphi^k(z_{h}) \\
\u_{[n]}^{-1}\varphi^{k+K+R}(0)\u_{[n]}&=&\u_{[q]}^{-1}\varphi^k(\tilde{z}_1)\u_{[q]}&\u_{[q]}^{-1}\varphi^k(\tilde{z}_2)\u_{[q]}&\cdots&\u_{[q]}^{-1}\varphi^k(\tilde{z}_{h})\u_{[q]}
\end{array}
\end{equation*}
Since $\Psi(\varphi^k(z_j))=\Psi(\u_{[q]}^{-1}\varphi^k(\tilde{z}_j)\u_{[q]})$ for all $j$, the co-decomposition can be applied to those pairs as well, thus
$$
\Codec\begin{pmatrix}\varphi^{k+K+R}(0)\\ \u_{[n]}^{-1}\varphi^{k+K+R}(0)\u_{[n]}\end{pmatrix}=\Codec\begin{pmatrix}z_1\\ \u_{[q]}^{-1}\varphi^k(\tilde{z}_1)\u_{[q]}\end{pmatrix}\cup\cdots\cup\Codec\begin{pmatrix}z_{h}\\ \u_{[q]}^{-1}\varphi^k(\tilde{z}_{h})\u_{[q]}\end{pmatrix},
$$
hence we obtain~\eqref{Z NMn}.
\end{proof}

\begin{remark}
The assumption ``$\varphi^k(\tilde{z})$ has the prefix $\u_{[q]}$ for all $\begin{pmatrix}z\\ \tilde{z}\end{pmatrix}\in\Z_\u(Q)$'' in Proposition~\ref{NMn} is technical and can be always satisfied by a suitable $\Z_\u(Q)$.
\end{remark}

\begin{corollary}
Let the number $n$ satisfy $\langle n\rangle_F=(\langle Q\rangle_F,\underbrace{0,0,\ldots,0}_{\text{$k$ times $0$}})$.
Then
\begin{equation}\label{Z M0}
\Z_\u(n)=\bigcup_{\begin{pmatrix}z\\ \tilde{z}\end{pmatrix}\in\Z_\u(Q)}\Codec\begin{pmatrix}\varphi^k(z)\\ \varphi^k(\tilde{z})\end{pmatrix}\,.
\end{equation}
\end{corollary}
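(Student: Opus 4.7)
The plan is to derive the corollary as an immediate specialization of Proposition~\ref{NMn} to the case when $\langle n\rangle_F$ ends with a block of $k$ zeros. In the notation of Proposition~\ref{NMn}, this corresponds to $d_{k-1}=d_{k-2}=\cdots=d_0=0$, so that the number $q$ defined by $\langle q\rangle_F=(d_{k-1},\ldots,d_0)$ equals $0$, and therefore $\u_{[q]}=\u_{[0]}=\varepsilon$.

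First I would verify that the technical hypothesis~\eqref{podm.} of Proposition~\ref{NMn} is automatically met. Since $\varepsilon$ is a prefix of every finite word, the condition ``$\varphi^k(\tilde z)$ has the prefix $\u_{[q]}=\varepsilon$ for all $\begin{pmatrix}z\\ \tilde z\end{pmatrix}\in\Z_\u(Q)$'' holds trivially, regardless of which abelian co-decomposition $\Z_\u(Q)$ is chosen. Proposition~\ref{NMn} therefore applies without any constraint on $\Z_\u(Q)$.

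Next I would substitute $\u_{[q]}=\varepsilon$ into the right-hand side of~\eqref{Z NMn}. Using $\varepsilon^{-1}w\varepsilon=w$ for every word $w$, we obtain $\u_{[q]}^{-1}\varphi^k(\tilde z)\u_{[q]}=\varphi^k(\tilde z)$ for each $\begin{pmatrix}z\\ \tilde z\end{pmatrix}\in\Z_\u(Q)$, and formula~\eqref{Z NMn} collapses directly to~\eqref{Z M0}. There is no serious obstacle: the corollary is simply the boundary case of Proposition~\ref{NMn} in which the lower part of the $F$-representation of $n$ vanishes and the conjugation by $\u_{[q]}$ degenerates.
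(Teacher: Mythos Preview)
Your proposal is correct and follows exactly the paper's approach: the paper's proof consists of the single sentence ``The statement is obtained from Proposition~\ref{NMn} for $q=0$,'' and you have simply spelled out the details of that specialization (including the trivial verification of hypothesis~\eqref{podm.}).
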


\begin{proof}
The statement is obtained from Proposition~\ref{NMn} for $q=0$.
\end{proof}

\begin{proposition}\label{N1N2}
Let
$$
\langle n_1\rangle_F=(\langle Q_1\rangle_F,\underbrace{d_{k-1},d_{k-2},\ldots,d_1,d_0}_{\langle q\rangle_F}) \quad\text{and}\quad \langle n_2\rangle_F=(\langle Q_2\rangle_F,\underbrace{d_{k-1},d_{k-2},\ldots,d_1,d_0}_{\langle q\rangle_F})\,,
$$
where $\Z_\u(Q_1),\Z_\u(Q_2)$ satisfy
\begin{itemize}
\item[(i)] $\Z_\u(Q_1)=\Z_\u(Q_2)$,
\item[(ii)] $\varphi^k(\tilde{z})$ has the prefix $\u_{[q]}$ for all $\begin{pmatrix}z\\ \tilde{z}\end{pmatrix}\in\Z_\u(Q_1)$.
\end{itemize}
Then $\Pred(n_1)=\Pred(n_2)$.
\end{proposition}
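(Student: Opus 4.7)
The plan is to deduce this directly from Proposition~\ref{NMn} together with Proposition~\ref{prop.Pred(n).}. The point is that the right-hand side of formula~\eqref{Z NMn} depends on the ``upper part'' $\langle Q\rangle_F$ only through the set $\Z_\u(Q)$, and depends on the ``lower part'' only through the integer $k$ and the word $\u_{[q]}$, which are the same for $n_1$ and $n_2$ by assumption. So once we can legitimately invoke Proposition~\ref{NMn} for both $n_1$ and $n_2$, the conclusion will be immediate.

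First I would check that the hypotheses of Proposition~\ref{NMn} are satisfied for both $n_1$ and $n_2$. Hypothesis (ii) of the current statement says that $\varphi^k(\tilde z)$ has prefix $\u_{[q]}$ for every $\begin{pmatrix}z\\ \tilde z\end{pmatrix}\in\Z_\u(Q_1)$; by hypothesis (i), the same property automatically holds for every pair in $\Z_\u(Q_2)$, since these two sets coincide. Thus Proposition~\ref{NMn} applies to $n_1$ and to $n_2$ with the \emph{same} value of $k$ and the \emph{same} word $\u_{[q]}$.

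Applying formula~\eqref{Z NMn} to each of $n_1$ and $n_2$, I would obtain
\begin{equation*}
\Z_\u(n_i)=\bigcup_{\begin{pmatrix}z\\ \tilde z\end{pmatrix}\in\Z_\u(Q_i)}\Codec\begin{pmatrix}\varphi^k(z)\\ \u_{[q]}^{-1}\varphi^k(\tilde z)\u_{[q]}\end{pmatrix}\qquad(i=1,2).
\end{equation*}
Because $\Z_\u(Q_1)=\Z_\u(Q_2)$ by hypothesis (i), the two unions on the right-hand side are formally identical, so a valid choice of co-decomposition $\Z_\u(n_1)$ coincides with a valid choice of $\Z_\u(n_2)$. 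Finally, Proposition~\ref{prop.Pred(n).} expresses $\Pred_\u(n)$ as a union over $\Z_\u(n)$, so with the common choice above we get $\Pred_\u(n_1)=\Pred_\u(n_2)$.

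The only subtlety I anticipate is notational: since abelian co-decompositions are not unique, I should state carefully that we are picking the $\Z_\u(Q_1)$ furnished by hypothesis (ii) and using the \emph{same} set (via (i)) as our $\Z_\u(Q_2)$, so that both applications of Proposition~\ref{NMn} genuinely produce the same set. Once that is pinned down, the argument is a one-line substitution and no further computation is required.
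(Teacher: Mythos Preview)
Your argument is correct and matches the paper's own proof: apply Proposition~\ref{NMn} to both $n_1$ and $n_2$ (using (i) and (ii) to ensure the hypothesis holds for each) to obtain $\Z_\u(n_1)=\Z_\u(n_2)$, then conclude via Proposition~\ref{prop.Pred(n).}. Your explicit remark about fixing a common choice of co-decomposition is a helpful clarification of a point the paper leaves implicit.
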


\begin{proof}
Equation~\eqref{Z NMn} in Proposition~\ref{NMn} implies $\Z_\u(n_1)=\Z_\u(n_2)$, and equation~\eqref{Pred(n).} then gives $\Pred(n_1)=\Pred(n_2)$.
\end{proof}

\begin{remark}\label{infinitely}
Proposition~\ref{N1N2} can be used for proving that a certain value of $\AC_\u$ is attained infinitely many times. Let $\langle n_i\rangle_F=(\langle Q_i \rangle_F, \langle q \rangle_F)$, where $\langle q \rangle_F$ is the normal $F$-representation of a fixed number $q$ and $\{Q_i\}_{i=0}^{\infty}$ is a sequence with the property $\Z_\u(Q_i)=\Z_\u(Q_{1})$ for all $i\in\N$, and moreover satisfying (ii) from Proposition~\ref{N1N2}. Then $\Pred(n_i)$ is constant for all $i\in\N$, i.e., $\AC$ attains the value $\AC(n_{1})$ infinitely many times.
It will be demonstrated on a concrete example in Section~\ref{Applications}.
\end{remark}


\section{Algorithm for calculating $\AC_\u(n)$}\label{Algorithm}

Let us summarize the results in the form of an explicit algorithm.

Let $\u$ be a Parry word.
\begin{enumerate}
\item
Find $R$ from~\eqref{R}.
\item Find $\langle n\rangle_F=(d_{k-1},d_{k-2},\ldots,d_1,d_0)$.
\item Let $\langle N_1\rangle_F=(d_{k-1})$. Find $\Z_\u(N_1)$ from~\eqref{Z(n).} using~\eqref{B(n)Parry}:
$$
\Z_\u(N_1)=
\Codec\begin{pmatrix}\varphi^{1+R}(0)\\ 0^{-d_{k-1}}\varphi^{1+R}(0)0^{d_{k-1}}\end{pmatrix}\,.
$$
\emph{N.B.:} $\Z_\u(N_1)$ is not unique; with the view of a correct functionality in the next step, choose the decomposition
$$
\begin{array}{ccccccc}
\varphi^{1+R}(0)&=&z_0&z_1&z_2&\cdots&z_{h} \\
0^{-d_{k-1}}\varphi^{1+R}(0)0^{d_{k-1}}&=&\tilde{z}_0&\tilde{z}_1&\tilde{z}_2&\cdots&\tilde{z}_{h}
\end{array}
$$
so that for all $j\in\{1,\ldots,h\}$, $\varphi(\tilde{z}_h)$ has the prefix $0^{\alpha_0}$. For the optimality, take $h\in\N_0$ maximal possible.
\item For all $i=2,\ldots,k$ do \\
Let $\langle N_i\rangle_F=(d_{k-1},\ldots,d_{k-i})$. Find $\Z_\u(N_i)$ by substituting $Q:=N_{i-1}$, $\langle q\rangle_F:=(d_{k-i})$ into~\eqref{Z NMn}:
$$
\Z_\u(N_i)=
\bigcup_{\begin{pmatrix}z\\ \tilde{z}\end{pmatrix}\in\Z_\u(N_{i-1})}\Codec\begin{pmatrix}\varphi(z)\\ 0^{-d_{k-i}}\varphi(\tilde{z})0^{d_{k-i}}\end{pmatrix}\,.
$$
\emph{N.B.:} $\Z_\u(N_i)$ is not unique; with the view of a correct functionality in the next step, choose all the decompositions
$$
\begin{array}{cccccc}
\varphi(z)&=&z'_0&z'_1&\cdots&z'_{h_i} \\
0^{-d_{k-i}}\varphi(\tilde{z})0^{d_{k-i}}&=&\tilde{z}'_0&\tilde{z}'_1&\cdots&\tilde{z}'_{h_i}
\end{array}
$$
so that for all $j\in\{1,\ldots,h_i\}$, $\varphi(\tilde{z}'_j)$ has the prefix $0^{\alpha_0}$. For the optimality, take $h_i\in\N_0$ maximal possible.
\item Since $N_k=n$, find $\Pred_\u(n)$ from $\Z_\u(N_k)$ using the formula~\eqref{Pred(n).}.
\item $\AC_\u(n)=\#\Pred_\u(n)$.
\end{enumerate}

\begin{remark}
The requirements in Step 3 and 4 ensure that the condition~\eqref{podm.} is satisfied for all $i$.
\end{remark}


\section{Example: Tribonacci word}\label{Applications}

Let $\mathbf{t}$ denote the Tribonacci word, which is the fixed point of the substitution
\begin{equation}\label{Tribonacci}
\begin{array}{ccl}
0 & \mapsto & 01 \\
1 & \mapsto & 02 \\
2 & \mapsto & 0
\end{array}
\end{equation}
Note that~\eqref{Tribonacci} is a substitution of the type~\eqref{simpleParry}, thus $\mathbf{t}$ is a simple Parry word.

The abelian complexity of the Tribonacci word has been examined by Richomme, Saari and Zamboni in~\cite{RSZ}, where it has been shown that $\AC_\mathbf{t}(n)\in\{3,4,5,6,7\}$ for all $n\in\N$ and that each of these five values is assumed. It has been also proved that the extreme values $3$ and $7$ are attained infinitely often, but the question whether $\AC_\mathbf{t}(n)$ attains also other values ($4,5,6$) infinitely many times remained open. Here we give the answer, applying the method
of abelian co-decomposition developed above.

\subsection{$\AC$ attains the value $4$ infinitely many times}

\begin{proposition}\label{Trib.4}
If $\langle n_i\rangle_F=(\underbrace{1,0,1,0,\ldots,1,0}_{\text{$i$ times $(1,0)$}},1)=((1,0)^i,1)$, then
$$
\begin{array}{ccl}
\Z(n_0) & = & \left\{\begin{pmatrix}01\\10\end{pmatrix},\begin{pmatrix}02\\20\end{pmatrix},\begin{pmatrix}0\\0\end{pmatrix}\right\} \\
\Z(n_1) & = & \left\{\begin{pmatrix}01\\10\end{pmatrix},\begin{pmatrix}0\\0\end{pmatrix},\begin{pmatrix}201\\102\end{pmatrix},\begin{pmatrix}0201\\1020\end{pmatrix},\begin{pmatrix}02\\20\end{pmatrix}\right\} \\
\Z(n_i) & = & \left\{\begin{pmatrix}01\\10\end{pmatrix},\begin{pmatrix}0\\0\end{pmatrix},\begin{pmatrix}201\\102\end{pmatrix},\begin{pmatrix}0201\\1020\end{pmatrix},\begin{pmatrix}02\\20\end{pmatrix},\begin{pmatrix}1\\1\end{pmatrix},\begin{pmatrix}2\\2\end{pmatrix}\right\} \quad\text{for all $i\geq2$}.
\end{array}
$$
\end{proposition}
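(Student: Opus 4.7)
The plan is to prove the statement by induction on $i$, using the recursion~\eqref{Z NMn} from Proposition~\ref{NMn}. First I would pin down the parameters of the Tribonacci substitution~\eqref{Tribonacci}: since $m=3$ and $\alpha_0=\alpha_1=1$, $\alpha_2=0$, formula~\eqref{Rsim} gives $\ell'=1$ and $R=R_{\mathrm{s}}=3$. For the base case, Step~3 of the algorithm in Section~\ref{Algorithm} with $\langle n_0\rangle_F=(1)$ requires
$\Z(n_0)=\Codec\begin{pmatrix}\varphi^{4}(0)\\0^{-1}\varphi^{4}(0)\cdot 0\end{pmatrix}$,
and a finest splitting of the pair $(0102010010201,\,1020100102010)$ at the positions where the two prefixes share a Parikh vector reduces precisely to the three pairs $(01,10)$, $(02,20)$, $(0,0)$.

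For the inductive step I would observe that $\langle n_i\rangle_F=(\langle n_{i-1}\rangle_F,0,1)$ for $i\geq1$, and apply Proposition~\ref{NMn} with $Q=n_{i-1}$, $\langle q\rangle_F=(0,1)$, $q=1$, $\u_{[q]}=0$, and $k=2$. The hypothesis~\eqref{podm.} is automatic for Tribonacci because $\varphi^{2}(\ell)$ begins with $0$ for every letter $\ell\in\{0,1,2\}$; consequently
\begin{equation*}
\Z(n_i)=\bigcup_{(z,\tilde z)\in\Z(n_{i-1})}\Codec\begin{pmatrix}\varphi^{2}(z)\\0^{-1}\varphi^{2}(\tilde z)\cdot 0\end{pmatrix}.
\end{equation*}
I would then perform the two non-stable transitions by hand. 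The step $i=0\to1$ produces two new pairs, $(201,102)$ and $(0201,1020)$, coming from the co-decompositions of $\varphi^{2}$ applied to $(01,10)$ and $(02,20)$; the step $i=1\to2$ produces the two further pairs $(1,1)$ and $(2,2)$ from the co-decomposition associated with $(201,102)$.

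For the stabilization $\Z(n_{i+1})=\Z(n_i)$ when $i\geq2$, it remains to check that the two newly available input pairs $(1,1)$ and $(2,2)$ do not produce anything outside the seven-element set. A short direct calculation gives finest co-decompositions $\{(01,10),(0,0)\}$ and $\{(01,10)\}$ for the transformed pairs $(010,100)$ and $(01,10)$ respectively, both of which are already present, closing the induction.

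The hard part is purely bookkeeping. At each induction step one must exhibit a finest co-decomposition of every conjugated pair $(\varphi^{2}(z),\,0^{-1}\varphi^{2}(\tilde z)\cdot 0)$, done by tabulating the running Parikh-vector differences of the two words and marking the indices where they vanish. Since every word involved has length at most $13$ and the alphabet has only three letters, the calculations are finite and routine; choosing the finest splitting throughout automatically preserves condition~\eqref{podm.} at every stage, so no additional bookkeeping is needed.
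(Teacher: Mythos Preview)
Your proposal is correct and follows essentially the same route as the paper: the same recursion $\Z(n_i)=\bigcup_{(z,\tilde z)\in\Z(n_{i-1})}\Codec\bigl(\varphi^2(z),\,0^{-1}\varphi^2(\tilde z)\,0\bigr)$ obtained from Proposition~\ref{NMn} with $k=2$, $q=1$, followed by explicit hand computation until the set stabilises. The only cosmetic difference is that for the base case you invoke Step~3 of the algorithm (hence work with $\varphi^{4}(0)$), whereas the paper applies Proposition~\ref{vsechny} directly with $k=0$ and works with the shorter word $\varphi^{3}(0)$; both choices yield the same three-element set $\Z(n_0)$. One minor remark: in the transition $i=1\to 2$ you should also record that the input pair $(0201,1020)$ contributes only elements already listed (namely $(01,10),(0,0),(201,102),(1,1)$), so that the claimed seven-element set is indeed the full union; the paper checks this pair explicitly.
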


\begin{proof}
At first, we compute $\Z(n_0)$ from the definition~\eqref{Z(n).}, wherefore we need to determine $\mathbf{t}_{[n_0]}$ and $\mathbf{t}_{[B(n_0)]}$. We have $\langle n_0\rangle_F=(1)$, hence $n_0=1$ and $\mathbf{t}_{[n_0]}=0$. Since $n_0\leq1=F_0$ and equation~\eqref{Rsim} implies $R=m$, we have $\mathbf{t}_{[B(n_0)]}=\varphi^{0+3}(0)0$ from~\eqref{B(n)Parry}. Therefore \eqref{Z(n).} takes the form
$$
\Z(n_0)=\Codec\begin{pmatrix}\varphi^3(0)\\ 0^{-1}\varphi^3(0)0\end{pmatrix}\,.
$$
We calculate
$$
\begin{array}{cccccccc}
\varphi^3(0)&=&0102010 &=& \overbrace{01}^{z_1}&\overbrace{02}^{z_2}&\overbrace{01}^{z_3}&\overbrace{0}^{z_4} \\
0^{-1}\varphi^3(0)0&=&1020100 &=& \underbrace{10}_{\tilde{z}_1}&\underbrace{20}_{\tilde{z}_2}&\underbrace{10}_{\tilde{z}_3}&\underbrace{0}_{\tilde{z}_4}
\end{array}
$$
hence we obtain the decomposition
$$
\Z(n_0)=\left\{\begin{pmatrix}01\\10\end{pmatrix},\begin{pmatrix}02\\20\end{pmatrix},\begin{pmatrix}0\\0\end{pmatrix}\right\}\,.
$$

Let us proceed to $i\geq1$. Since it holds $\langle n_i\rangle_F=(\langle n_{i-1}\rangle_F,0,1)$ for all $i\in\N$, the decomposition $\Z(n_i)$ will be derived from $\Z(n_{i-1})$ using formula~\eqref{Z NMn} with $\langle q\rangle_F=(0,1)$ (whence $\mathbf{t}_{[q]}=0$) and $Q=n_{i-1}$:
\begin{equation}\label{Uk.4}
\Z(n_i)=\bigcup_{\begin{pmatrix}z\\ \tilde{z}\end{pmatrix}\in\Z(n_{i-1})}\Codec\begin{pmatrix}\varphi^2(z)\\ 0^{-1}\varphi^2(\tilde{z})0\end{pmatrix}\,.
\end{equation}
For the use of~\eqref{Z NMn} we need that $\varphi^2(\tilde{z})$ has the prefix $0$ for all $\begin{pmatrix}z\\ \tilde{z}\end{pmatrix}\in\Z(n_{i-1})$, in other words, that $\varphi^2(\tilde{z})$ begins with $0$. With regard to~\eqref{Tribonacci}, this condition is trivially satisfied for any $\tilde{z}$.

Let $i=1$. Then
$$
\Z(n_1)=\bigcup_{\begin{pmatrix}z\\ \tilde{z}\end{pmatrix}\in\left\{\begin{pmatrix}01\\10\end{pmatrix},\begin{pmatrix}02\\20\end{pmatrix},\begin{pmatrix}0\\0\end{pmatrix}\right\}}\Codec\begin{pmatrix}\varphi^2(z)\\ 0^{-1}\varphi^2(\tilde{z})0\end{pmatrix}\,.
$$
We have
$$
\begin{array}{ccccccccccc}
\varphi^2(01)&=&01\:0\:201\:0 && \varphi^2(02)&=&01\:0201 && \varphi^2(0)&=&01\:02 \\
0^{-1}\varphi^2(10)0&=&10\:0\:102\:0 && 0^{-1}\varphi^2(20)0&=&10\:1020 && 0^{-1}\varphi^2(0)0&=&10\:20
\end{array}
$$
hence
$$
\Z(n_1)=
\left\{\begin{pmatrix}01\\10\end{pmatrix},\begin{pmatrix}0\\0\end{pmatrix},\begin{pmatrix}201\\102\end{pmatrix},\begin{pmatrix}0201\\1020\end{pmatrix},\begin{pmatrix}02\\20\end{pmatrix}\right\}\,.
$$

Let $i=2$. Then
$$
\Z(n_2)=
\bigcup_{\begin{pmatrix}z\\ \tilde{z}\end{pmatrix}\in\Z(n_{1})}\Codec\begin{pmatrix}\varphi^2(z)\\ 0^{-1}\varphi^2(\tilde{z})0\end{pmatrix}\,,
$$
and since $Z(n_{1})=\Z(n_{0})\cup\left\{\begin{pmatrix}201\\102\end{pmatrix},\begin{pmatrix}0201\\1020\end{pmatrix}\right\}$, it holds
\begin{multline*}
\Z(n_2)=\bigcup_{\begin{pmatrix}z\\ \tilde{z}\end{pmatrix}\in\Z(n_{0})}\Codec\begin{pmatrix}\varphi^2(z)\\ 0^{-1}\varphi^2(\tilde{z})0\end{pmatrix}\cup\Codec\begin{pmatrix}\varphi^2(201)\\ 0^{-1}\varphi^2(102)0\end{pmatrix}\cup\Codec\begin{pmatrix}\varphi^2(0201)\\ 0^{-1}\varphi^2(1020)0\end{pmatrix}=\\
=\Z(n_1)\cup\Codec\begin{pmatrix}\varphi^2(201)\\ 0^{-1}\varphi^2(102)0\end{pmatrix}\cup\Codec\begin{pmatrix}\varphi^2(0201)\\ 0^{-1}\varphi^2(1020)0\end{pmatrix}\,.
\end{multline*}
We have
$$
\begin{array}{ccccccc}
\varphi^2(201)&=&01\:0\,1\,0\,2\,0\,1\,0 && \varphi^2(0201)&=&01\:0\:201\:0\,1\,0\:2010 \\
0^{-1}\varphi^2(102)0&=&10\:0\,1\,0\,2\,0\,1\,0 && 0^{-1}\varphi^2(1020)0&=&10\:0\:102\:0\,1\,0\:1020
\end{array}
$$
therefore abelian co-decompositions of $\begin{pmatrix}\varphi^2(201)\\ 0^{-1}\varphi^2(102)0\end{pmatrix}$ and $\begin{pmatrix}\varphi^2(0201)\\ 0^{-1}\varphi^2(1020)0\end{pmatrix}$ are
$$
\Codec\begin{pmatrix}\varphi^2(201)\\ 0^{-1}\varphi^2(102)0\end{pmatrix}=\left\{\begin{pmatrix}01\\10\end{pmatrix},\begin{pmatrix}0\\0\end{pmatrix},\begin{pmatrix}1\\1\end{pmatrix},\begin{pmatrix}2\\2\end{pmatrix}\right\}\,,
$$
$$
\Codec\begin{pmatrix}\varphi^2(0201)\\ 0^{-1}\varphi^2(1020)0\end{pmatrix}=\left\{\begin{pmatrix}01\\10\end{pmatrix},\begin{pmatrix}0\\0\end{pmatrix},\begin{pmatrix}201\\102\end{pmatrix},\begin{pmatrix}1\\1\end{pmatrix}\right\}\,,
$$
hence
$$
\Z(n_2)=\Z(n_1)\cup\left\{\begin{pmatrix}01\\10\end{pmatrix},\begin{pmatrix}0\\0\end{pmatrix},\begin{pmatrix}1\\1\end{pmatrix},\begin{pmatrix}2\\2\end{pmatrix},\begin{pmatrix}201\\102\end{pmatrix}\right\}=\Z(n_1)\cup\left\{\begin{pmatrix}1\\1\end{pmatrix},\begin{pmatrix}2\\2\end{pmatrix}\right\}\,.
$$

Let $i=3$. Then
$$
\Z(n_3)=
\bigcup_{\begin{pmatrix}z\\ \tilde{z}\end{pmatrix}\in\Z(n_{2})}\Codec\begin{pmatrix}\varphi^2(z)\\ 0^{-1}\varphi^2(\tilde{z})0\end{pmatrix}\,,
$$
and since $\Z(n_{2})=\Z(n_{1})\cup\left\{\begin{pmatrix}1\\1\end{pmatrix},\begin{pmatrix}2\\2\end{pmatrix}\right\}$, it holds
\begin{multline*}
\Z(n_3)=\bigcup_{\begin{pmatrix}z\\ \tilde{z}\end{pmatrix}\in\Z(n_{1})}\Codec\begin{pmatrix}\varphi^2(z)\\ 0^{-1}\varphi^2(\tilde{z})0\end{pmatrix}\cup\Codec\begin{pmatrix}\varphi^2(1)\\ 0^{-1}\varphi^2(1)0\end{pmatrix}\cup\Codec\begin{pmatrix}\varphi^2(2)\\ 0^{-1}\varphi^2(2)0\end{pmatrix}=\\
=\Z(n_2)\cup\Codec\begin{pmatrix}\varphi^2(1)\\ 0^{-1}\varphi^2(1)0\end{pmatrix}\cup\Codec\begin{pmatrix}\varphi^2(2)\\ 0^{-1}\varphi^2(2)0\end{pmatrix}\,.
\end{multline*}
We have
$$
\Codec\begin{pmatrix}\varphi^2(1)\\ 0^{-1}\varphi^2(1)0\end{pmatrix}\cup\Codec\begin{pmatrix}\varphi^2(2)\\ 0^{-1}\varphi^2(2)0\end{pmatrix}=\Codec\begin{pmatrix}010\\ 100\end{pmatrix}\cup\Codec\begin{pmatrix}01\\ 10\end{pmatrix}=
\left\{\begin{pmatrix}01\\10\end{pmatrix},\begin{pmatrix}0\\0\end{pmatrix}\right\}\,,
$$
hence
$$
\Z(n_3)=\Z(n_2)\,.
$$
Then~\eqref{Uk.4} implies $\Z(n_4)=\Z(n_3)$, $\Z(n_5)=\Z(n_4)$ etc.,
therefore $\Z(n_i)=\Z(n_2)$ for all $i\geq2$.
\end{proof}

\begin{corollary}\label{Coro.Trib.4}
If $\langle n_i\rangle_F=((1,0)^i,1)$, then
$\AC(n_i)=4$ for all $i\in\N$. Consequently, the abelian complexity of the Tribonacci word attains the value $4$ infinitely many times.
\end{corollary}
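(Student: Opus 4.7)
The plan is to combine the identity $\AC_\u(n)=\#\Pred_\u(n)$ from~\eqref{ACred} with Proposition~\ref{prop.Pred(n).}, which expresses $\Pred_\mathbf{t}(n_i)$ as the union, over pairs $\begin{pmatrix}z\\ \tilde z\end{pmatrix}\in\Z(n_i)$, of the sets of differences $\Psi(s)-\Psi(r)$ where $r$ is a prefix of $z$, $s$ is a prefix of $\tilde z$, and $|r|=|s|$. Since Proposition~\ref{Trib.4} already supplies the explicit list of pairs forming $\Z(n_i)$, the task reduces to a finite enumeration of relative Parikh vectors.

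I would first handle the stable range $i\geq 2$, working through the seven pairs listed in Proposition~\ref{Trib.4} one by one. The pair $\begin{pmatrix}01\\ 10\end{pmatrix}$ produces $(-1,1,0)$ at prefix length $1$ (and $(0,0,0)$ at lengths $0$ and $2$); the pair $\begin{pmatrix}02\\ 20\end{pmatrix}$ produces $(-1,0,1)$; the pair $\begin{pmatrix}201\\ 102\end{pmatrix}$ produces $(0,1,-1)$; the pair $\begin{pmatrix}0201\\ 1020\end{pmatrix}$ produces only vectors already obtained at lengths $1,2,3$; and the three trivial pairs $\begin{pmatrix}0\\ 0\end{pmatrix}$, $\begin{pmatrix}1\\ 1\end{pmatrix}$, $\begin{pmatrix}2\\ 2\end{pmatrix}$ contribute only the zero vector. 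Taking the union yields $\Pred_\mathbf{t}(n_i)=\{(0,0,0),(-1,1,0),(0,1,-1),(-1,0,1)\}$, so $\AC_\mathbf{t}(n_i)=4$.

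The case $i=1$ is then almost immediate: Proposition~\ref{Trib.4} shows that $\Z(n_1)$ coincides with $\Z(n_i)$ for $i\geq 2$ apart from the two trivial pairs $\begin{pmatrix}1\\ 1\end{pmatrix}$ and $\begin{pmatrix}2\\ 2\end{pmatrix}$, which contribute only the zero vector. Hence the same four relative Parikh vectors appear and $\AC_\mathbf{t}(n_1)=4$. This exhibits the infinite family $\{n_i\}_{i\in\N}$ on which $\AC_\mathbf{t}$ takes the value~$4$.

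The main obstacle is really just bookkeeping: one must enumerate the prefix pairs systematically so that no relative Parikh vector is missed, and verify that the four listed vectors are pairwise distinct (which is immediate by inspection of their entries). The structural content — in particular the stabilisation of $\Z(n_i)$ from $i=2$ onward — has already been extracted in Proposition~\ref{Trib.4}, so no further conceptual step is required, and the second assertion of the corollary follows at once from the first.
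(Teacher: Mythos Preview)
Your proposal is correct and follows essentially the same route as the paper: apply Proposition~\ref{prop.Pred(n).} to the explicit pairs in $\Z(n_i)$ from Proposition~\ref{Trib.4}, enumerate the prefix-difference vectors, and read off the four-element set $\{(0,0,0),(-1,1,0),(0,1,-1),(-1,0,1)\}$. The only cosmetic difference is that the paper treats $i=1$ first and then remarks that $i\geq2$ gives the same $\Pred$, whereas you proceed in the opposite order; the content is identical.
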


\begin{proof}
We apply formula~\eqref{Pred(n).} on the results of Proposition~\ref{Trib.4}. It holds
$$
\Pred(n_1)=\bigcup_{\begin{pmatrix}z\\ \tilde{z}\end{pmatrix}\in\Z(n_1)}\left\{\Psi(s)-\Psi(r)\;\left|\;\text{$r$ is a prefix of $x$, $s$ is a prefix of $\tilde{z}$, $|s|=|r|$}\right.\right\}\,,
$$
and since $\Z(n_1)=\left\{\begin{pmatrix}01\\10\end{pmatrix},\begin{pmatrix}0\\0\end{pmatrix},\begin{pmatrix}201\\102\end{pmatrix},\begin{pmatrix}0201\\1020\end{pmatrix},\begin{pmatrix}02\\20\end{pmatrix}\right\}$, we have
\begin{multline*}
\Pred(n_1)=\left\{(-1,1,0),(0,0,0)\right\}\cup\left\{(0,0,0)\right\}\cup\left\{(0,1,-1),(0,0,0)\right\}\cup\\
\cup\left\{(-1,1,0),(0,1-1),(0,0,0)\right\}\cup\left\{,(-1,0,1),(0,0,0)\right\}=\\
=\left\{(-1,1,0),(0,0,0),(0,1,-1),(-1,0,1)\right\}\,.
\end{multline*}
For $i\geq2$, it is easy to see that $\Pred(n_i)=\Pred(n_1)$.

Then $\AC(n_i)=\#\Pred(n_i)=4$ for all $i\in\N$, see formula~\eqref{ACred}.
\end{proof}


\subsection{$\AC$ attains the value $5$ infinitely many times}\label{AC5}

\begin{proposition}\label{Trib.5}
If $\langle N_i\rangle_F=(\underbrace{1,0,0,0,\ldots,1,0,0,0}_{\text{$i$ times $(1,0,0,0)$}},1)=((1,0,0,0)^i,1)$, then
\begin{align*}
&\Z(N_2) =  \left\{\begin{pmatrix}01\\10\end{pmatrix},\begin{pmatrix}02\\20\end{pmatrix},\begin{pmatrix}0\\0\end{pmatrix}\begin{pmatrix}0102\\2010\end{pmatrix},\begin{pmatrix}1\\1\end{pmatrix},\begin{pmatrix}2\\2\end{pmatrix},\begin{pmatrix}201\\102\end{pmatrix}\right\}, \\
&\Z(N_i)  =  \left\{\begin{pmatrix}01\\10\end{pmatrix},\begin{pmatrix}02\\20\end{pmatrix},\begin{pmatrix}0\\0\end{pmatrix}\begin{pmatrix}0102\\2010\end{pmatrix},\begin{pmatrix}1\\1\end{pmatrix},\begin{pmatrix}2\\2\end{pmatrix},\begin{pmatrix}201\\102\end{pmatrix},\begin{pmatrix}00102\\20100\end{pmatrix}\right\} \;\text{for all $i\geq3$}.
\end{align*}
\end{proposition}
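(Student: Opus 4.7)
The plan is to repeat the strategy of the proof of Proposition~\ref{Trib.4} with block length four rather than two. Because $\langle N_i\rangle_F = (\langle N_{i-1}\rangle_F,0,0,0,1)$ for $i\ge 1$ and $N_0 = n_0 = 1$, each step invokes Proposition~\ref{NMn} with $Q = N_{i-1}$, $\langle q\rangle_F = (0,0,0,1)$ (so $q = F_0 = 1$ and $\mathbf{t}_{[q]} = 0$), and $k = 4$. Hypothesis~(ii) of Proposition~\ref{NMn}, namely that $\varphi^4(\tilde z)$ begin with $\mathbf{t}_{[q]} = 0$, is automatic since every image $\varphi(\ell)$ for $\ell\in\{0,1,2\}$ starts with $0$, hence so does $\varphi^4$ of any non-empty word. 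This yields the uniform recursion
$$
\Z(N_i) = \bigcup_{\begin{pmatrix}z\\ \tilde z\end{pmatrix}\in\Z(N_{i-1})} \Codec\begin{pmatrix}\varphi^4(z)\\ 0^{-1}\varphi^4(\tilde z)0\end{pmatrix}.
$$

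Starting from $\Z(N_0) = \left\{\begin{pmatrix}01\\10\end{pmatrix},\begin{pmatrix}02\\20\end{pmatrix},\begin{pmatrix}0\\0\end{pmatrix}\right\}$, which coincides with $\Z(n_0)$ from Proposition~\ref{Trib.4}, I compute $\Z(N_1)$, $\Z(N_2)$ and $\Z(N_3)$ explicitly using the images $\varphi^4(0) = 0102010010201$, $\varphi^4(1) = 01020100102$ and $\varphi^4(2) = 0102010$. At each application of $\Codec$, I select the maximal co-decomposition in the sense of the algorithm of Section~\ref{Algorithm}, i.e., cut after every position at which the second-line fragment $\varphi^4(\tilde z'_j)$ again begins with $0$; this preserves hypothesis~(ii) for the next iteration. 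Collecting the resulting pairs and discarding duplicates gives exactly the sets $\Z(N_2)$ and $\Z(N_3)$ of the statement. In particular, the new element $\begin{pmatrix}00102\\20100\end{pmatrix}$ that distinguishes $\Z(N_3)$ from $\Z(N_2)$ arises from applying $\varphi^4$ to a pair lying in $\Z(N_2)\setminus\Z(N_1)$, such as $\begin{pmatrix}0102\\2010\end{pmatrix}$: the extra leading $0$ on the right side (coming from the conjugation by $0^{-1}\cdots 0$) then shifts the split point to produce the longer fragment.

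The induction closes by verifying that one further iteration on $\Z(N_3)$ produces only pairs that are already listed in $\Z(N_3)$---the same stabilisation phenomenon that made $\Z(n_3) = \Z(n_2)$ in the proof of Proposition~\ref{Trib.4}. A trivial induction then gives $\Z(N_i) = \Z(N_3)$ for all $i \ge 3$. The principal obstacle is bookkeeping: since $|\varphi^4(0)| = 13$, a single iteration replaces every pair in $\Z(N_{i-1})$ by a pair of words of length up to $26$ with many admissible split points, and the choice of maximal co-decomposition must be made correctly at every stage---otherwise condition~(ii) of Proposition~\ref{NMn} can fail at a later step, or the recursion can stabilise at a strictly larger set than the claimed $\Z(N_3)$.
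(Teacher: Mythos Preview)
Your approach is essentially identical to the paper's: same recursion via Proposition~\ref{NMn} with $k=4$ and $\mathbf{t}_{[q]}=0$, same starting point $\Z(N_0)$, and the same stabilisation argument after computing up to $\Z(N_4)$.

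One factual slip to correct: you attribute the new element $\begin{pmatrix}00102\\20100\end{pmatrix}$ of $\Z(N_3)$ to the pair $\begin{pmatrix}0102\\2010\end{pmatrix}$, calling it an element of $\Z(N_2)\setminus\Z(N_1)$. In fact $\begin{pmatrix}0102\\2010\end{pmatrix}$ already belongs to $\Z(N_1)$; the unique element of $\Z(N_2)\setminus\Z(N_1)$ is $\begin{pmatrix}201\\102\end{pmatrix}$, and it is the co-decomposition of $\begin{pmatrix}\varphi^4(201)\\0^{-1}\varphi^4(102)0\end{pmatrix}$ that produces $\begin{pmatrix}00102\\20100\end{pmatrix}$. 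This matters because your incremental scheme relies on $\Z(N_{i})=\Z(N_{i-1})\cup\bigl(\text{contributions from }\Z(N_{i-1})\setminus\Z(N_{i-2})\bigr)$, so misidentifying which pair is new would derail the bookkeeping you yourself flag as the main obstacle.
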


\begin{proof}
We proceed in the same way as in the proof of Proposition~\ref{Trib.4}.

We have $\langle N_0\rangle_F=(1)$; $\Z(1)$ is known from the proof of Proposition~\ref{Trib.4}:
$$
\Z(N_0)=\left\{\begin{pmatrix}01\\10\end{pmatrix},\begin{pmatrix}02\\20\end{pmatrix},\begin{pmatrix}0\\0\end{pmatrix}\right\}\,.
$$

For all $i\geq1$, it holds $\langle N_i\rangle_F=(\langle N_{i-1}\rangle_F,0,0,0,1)$, therefore $\Z(N_i)$ will be derived from $\Z(N_{i-1})$ using formula~\eqref{Z NMn} to which we substitute $\langle q\rangle_F=(0,0,0,1)$, $Q=N_{i-1}$.

It is required that $\varphi^4(\tilde{z})$ has the prefix $\mathbf{t}_{[q]}=0$ for all $\begin{pmatrix}z\\ \tilde{z}\end{pmatrix}\in\Z(N_{i-1})$, which is obviously satisfied for any $\tilde{z}$.

Let us begin with $i=1$.
$$
\Z(N_1)=\bigcup_{\begin{pmatrix}z\\ \tilde{z}\end{pmatrix}\in\left\{\begin{pmatrix}01\\10\end{pmatrix},\begin{pmatrix}02\\20\end{pmatrix},\begin{pmatrix}0\\0\end{pmatrix}\right\}}\Codec\begin{pmatrix}\varphi^4(z)\\ 0^{-1}\varphi^4(\tilde{z})0\end{pmatrix}\,.
$$
We have
$$
\begin{array}{ccccccc}
\varphi^4(01)&=&01\:02\:01\:0\:01\:02\:01\:0102\:0\:1\:0\:0102 \\
0^{-1}\varphi^4(10)0&=&10\:20\:10\:0\:10\:20\:10\:2010\:0\:1\:0\:2010
\end{array}
$$
$$
\begin{array}{ccccccc}
\varphi^4(02)&=&01\:02\:01\:0\:01\:0201\:0\:1\:0\:2\:0\:1\:0 && \varphi^4(0)&=&01\:02\:01\:0\:01\:0201 \\
0^{-1}\varphi^4(20)f&=&10\:20\:10\:0\:10\:2010\:0\:1\:0\:2\:0\:1\:0 && 0^{-1}\varphi^4(0)0&=&10\:20\:10\:0\:10\:2010
\end{array}
$$
hence
$$
\Z(N_1)=\left\{\begin{pmatrix}01\\10\end{pmatrix},\begin{pmatrix}02\\20\end{pmatrix},\begin{pmatrix}0\\0\end{pmatrix},\begin{pmatrix}0102\\2010\end{pmatrix},\begin{pmatrix}1\\1\end{pmatrix},\begin{pmatrix}2\\2\end{pmatrix}\right\}=\Z(N_0)\cup\left\{\begin{pmatrix}0102\\2010\end{pmatrix},\begin{pmatrix}1\\1\end{pmatrix},\begin{pmatrix}2\\2\end{pmatrix}\right\}\,.
$$

We proceed in the same way for other values of $k$:

If $i=2$, we obtain
\begin{multline*}
\Z(N_2)
=\bigcup_{\begin{pmatrix}z\\ \tilde{z}\end{pmatrix}\in\Z(N_{0})\cup\left\{\begin{pmatrix}0102\\2010\end{pmatrix},\begin{pmatrix}1\\1\end{pmatrix},\begin{pmatrix}2\\2\end{pmatrix}\right\}}\Codec\begin{pmatrix}\varphi^4(z)\\ 0^{-1}\varphi^4(\tilde{z})0\end{pmatrix}=\\
=\Z(N_1)\cup\Codec\begin{pmatrix}\varphi^4(0102)\\ 0^{-1}\varphi^4(2010)0\end{pmatrix}\cup\Codec\begin{pmatrix}\varphi^4(1)\\ 0^{-1}\varphi^4(1)0\end{pmatrix}\cup\Codec\begin{pmatrix}\varphi^4(2)\\ 0^{-1}\varphi^4(2)0\end{pmatrix}\,.
\end{multline*}
Since it holds
$$
\begin{array}{ccccccccccccccccccc}
\varphi^4(0102)&=&01\:02\:01\:0\:01\:02\:01\:0102010\:01\:02\:01\:0\:201\:0\:01\:02\:01\:0102010 \\
0^{-1}\varphi^4(2010)0&=&10\:20\:10\:0\:10\:20\:10\:0102010\:10\:20\:10\:0\:102\:0\:10\:20\:10\:0102010
\end{array}
$$
$$
\begin{array}{ccccccccccc}
\varphi^4(1)&=&01\:02\:01\:0\:01\:02 && \varphi^4(2)&=&01\:02\:01\:0 \\
0^{-1}\varphi^4(1)0&=&10\:20\:10\:0\:10\:20 && 0^{-1}\varphi^4(2)0&=&10\:20\:10\:0
\end{array}
$$
we have
$$
\Z(N_2)=\Z(N_1)\cup\left\{\begin{pmatrix}01\\10\end{pmatrix},\begin{pmatrix}02\\20\end{pmatrix},\begin{pmatrix}0\\0\end{pmatrix},\begin{pmatrix}1\\1\end{pmatrix},\begin{pmatrix}2\\2\end{pmatrix},\begin{pmatrix}201\\102\end{pmatrix}\right\}=\Z(N_1)\cup\left\{\begin{pmatrix}201\\102\end{pmatrix}\right\}\,.
$$

Let $i=3$. Then
$$
\Z(N_3)
=\bigcup_{\begin{pmatrix}z\\ \tilde{z}\end{pmatrix}\in\Z(N_1)\cup\left\{\begin{pmatrix}201\\102\end{pmatrix}\right\}}\Codec\begin{pmatrix}\varphi^4(z)\\ 0^{-1}\varphi^4(\tilde{z})0\end{pmatrix}
=\Z(N_2)\cup\Codec\begin{pmatrix}\varphi^4(201)\\ 0^{-1}\varphi^4(102)0\end{pmatrix}\,.
$$
It holds
$$
\begin{array}{ccccccccccccccccccc}
\varphi^4(201)&=&01\:02\:01\:0\:01\:02\:01\:00102\:01\:0102\:01\:00102 \\
0^{-1}\varphi^4(102)0&=&10\:20\:10\:0\:10\:20\:10\:20100\:10\:2010\:10\:20100
\end{array}
$$
hence
$$
\Z(N_3)=\Z(N_2)\cup\left\{\begin{pmatrix}01\\10\end{pmatrix},\begin{pmatrix}02\\20\end{pmatrix},\begin{pmatrix}0\\0\end{pmatrix},\begin{pmatrix}00102\\20100\end{pmatrix},\begin{pmatrix}0102\\2010\end{pmatrix}\right\}=\Z(N_2)\cup\left\{\begin{pmatrix}00102\\20100\end{pmatrix}\right\}\,.
$$

Let $i=4$. Then
\begin{multline*}
\Z(N_4)
=\bigcup_{\begin{pmatrix}z\\ \tilde{z}\end{pmatrix}\in\Z(N_2)\cup\left\{\begin{pmatrix}00102\\20100\end{pmatrix}\right\}}\Codec\begin{pmatrix}\varphi^4(z)\\ 0^{-1}\varphi^4(\tilde{z})0\end{pmatrix}=\\
=\Z(N_3)\cup\Codec\begin{pmatrix}\varphi^4(00102)\\ 0^{-1}\varphi^4(20100)0\end{pmatrix}\,.
\end{multline*}
It holds
$$
\begin{array}{ccccccccccccccccccc}
\varphi^4(00102)&=&01\:02\:01\:0\:01\:02\:01\:0102010\:01\:02\:01\:0102010\:0102\:0102010\:01\:02\:01\:0102010 \\
0^{-1}\varphi^4(20100)0&=&10\:20\:10\:0\:10\:20\:10\:0102010\:10\:20\:10\:0102010\:2010\:0102010\:10\:20\:10\:0102010
\end{array}
$$
hence
$$
\Z(N_4)=\Z(N_3)\cup\left\{\begin{pmatrix}01\\10\end{pmatrix},\begin{pmatrix}02\\20\end{pmatrix},\begin{pmatrix}0\\0\end{pmatrix},\begin{pmatrix}0102\\2010\end{pmatrix}\right\}\,,
$$
which means $\Z(N_4)=\Z(N_3)$, and consequently, for all $i\geq4$, 
$$
\Z(N_i)=\Z(N_{i-1})=\Z(N_3)\,.
$$
\end{proof}

\begin{corollary}
If $\langle N_i\rangle_F=((1,0,0,0)^i,1)$, then
$$
\Pred(N_i)=\left\{(0,0,0),(-1,1,0),(-1,0,1),(0,-1,1),(0,1,-1)\right\} \quad\text{for all $i\geq2$}\,,
$$
therefore 
$\AC(N_i)=5$ for all $i\geq2$. Consequently, the abelian complexity of the Tribonacci word attains the value $5$ infinitely many times.
\end{corollary}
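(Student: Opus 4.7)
The strategy is a direct parallel of the proof of Corollary~\ref{Coro.Trib.4}: feed the explicit sets $\Z(N_i)$ delivered by Proposition~\ref{Trib.5} into formula~\eqref{Pred(n).}, which reduces the computation of $\Pred_\mathbf{t}(N_i)$ to listing, for every pair $\begin{pmatrix}z\\ \tilde z\end{pmatrix}\in\Z(N_i)$, the differences $\Psi(s)-\Psi(r)$ over all prefix pairs of equal length. Since Proposition~\ref{Trib.5} shows that $\Z(N_i)$ is either $\Z(N_2)$ (for $i=2$) or $\Z(N_3)$ (for $i\geq3$), and these sets are small and explicit, this is a finite bookkeeping task.

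First I would deal with $i=2$. For each of the seven pairs in $\Z(N_2)$ I list the differences $\Psi(s)-\Psi(r)$ for $|s|=|r|$ running from $0$ up to the common length $|z|=|\tilde z|$. The pairs of length $\leq 2$, namely $\begin{pmatrix}01\\10\end{pmatrix}$, $\begin{pmatrix}02\\20\end{pmatrix}$, $\begin{pmatrix}0\\0\end{pmatrix}$, $\begin{pmatrix}1\\1\end{pmatrix}$, $\begin{pmatrix}2\\2\end{pmatrix}$, contribute precisely $(0,0,0),(-1,1,0),(-1,0,1)$, exactly as in the Tribonacci computation of Corollary~\ref{Coro.Trib.4}. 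The pair $\begin{pmatrix}201\\102\end{pmatrix}$ adds $(0,1,-1)$, and $\begin{pmatrix}0102\\2010\end{pmatrix}$ adds $(0,-1,1)$. Taking the union yields
\[
\Pred_\mathbf{t}(N_2)=\bigl\{(0,0,0),(-1,1,0),(-1,0,1),(0,-1,1),(0,1,-1)\bigr\},
\]
so $\AC_\mathbf{t}(N_2)=5$.

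For $i\geq 3$ the only new element of $\Z(N_i)$ compared to $\Z(N_2)$ is the pair $\begin{pmatrix}00102\\20100\end{pmatrix}$. A direct computation of the six prefix differences there gives only $(0,0,0)$ and $(-1,0,1)$, both of which already lie in $\Pred_\mathbf{t}(N_2)$. Therefore $\Pred_\mathbf{t}(N_i)=\Pred_\mathbf{t}(N_2)$ and $\AC_\mathbf{t}(N_i)=5$ for all $i\geq 2$, which by~\eqref{ACred} and the infinite family $\{N_i\}_{i\geq 2}$ proves that $\AC_\mathbf{t}$ takes the value $5$ infinitely often.

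The main obstacle is genuinely just clerical: one must keep careful track of the Parikh vectors along each pair, and verify that the ``extra'' pair $\begin{pmatrix}00102\\20100\end{pmatrix}$ present in $\Z(N_i)$ for $i\geq 3$ does not produce a sixth relative Parikh vector. Conceptually, all the work has already been absorbed into the structural result Proposition~\ref{Trib.5}, so the corollary is essentially an unpacking step.
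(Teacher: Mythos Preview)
Your proposal is correct and follows exactly the approach the paper itself indicates: the paper's proof merely says ``the statement can be proved using formula~\eqref{Pred(n).} in the same way as Corollary~\ref{Coro.Trib.4}'', and you have carried out precisely that computation, including the verification that the extra pair $\begin{pmatrix}00102\\20100\end{pmatrix}$ contributes only vectors already present.
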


\begin{proof}
The statement can be proved using formula~\eqref{Pred(n).} in the same way as Corollary~\ref{Coro.Trib.4}.
\end{proof}


\subsection{$\AC$ attains the value $6$ infinitely many times}

\begin{proposition}\label{Trib.6}
If $\langle M_i\rangle_F=(\underbrace{1,0,0,0,\ldots,1,0,0,0}_{\text{$i$ times $(1,0,0,0)$}},0)=((1,0,0,0)^i,0)$, then
$$
\Z(M_i) =  \left\{\begin{pmatrix}0\\0\end{pmatrix},\begin{pmatrix}1\\1\end{pmatrix},\begin{pmatrix}2\\2\end{pmatrix},\begin{pmatrix}102\\201\end{pmatrix},\begin{pmatrix}10\\01\end{pmatrix},\begin{pmatrix}20\\02\end{pmatrix},\begin{pmatrix}2010\\0102\end{pmatrix},\begin{pmatrix}0102\\2010\end{pmatrix}\right\}
$$
for all $i\geq3$.
\end{proposition}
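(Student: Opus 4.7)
The plan is to apply the corollary of Proposition~\ref{NMn} (equation~\eqref{Z M0}) in the same style as in Propositions~\ref{Trib.4} and~\ref{Trib.5}. First I would rewrite $\langle M_i\rangle_F = (\langle N_{i-1}\rangle_F, 0, 0, 0, 0)$, which is valid because appending four trailing zeros to $((1,0,0,0)^{i-1},1)$ produces exactly $((1,0,0,0)^i,0)$. The corollary with $Q = N_{i-1}$, $k = 4$, and $\langle q\rangle_F = (0,0,0,0)$ then gives
$$
\Z(M_i) = \bigcup_{\begin{pmatrix}z\\ \tilde{z}\end{pmatrix}\in\Z(N_{i-1})}\Codec\begin{pmatrix}\varphi^4(z)\\ \varphi^4(\tilde{z})\end{pmatrix},
$$
with no prefix-matching condition to verify because $q = 0$ forces $\mathbf{t}_{[q]} = \varepsilon$.

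For the case $i = 3$, I would substitute $\Z(N_{i-1}) = \Z(N_2)$ from Proposition~\ref{Trib.5} and compute each of the seven co-decompositions. All the images $\varphi^4(z)$ and $\varphi^4(\tilde z)$ required here have already been displayed in the proof of Proposition~\ref{Trib.5}, so I would reuse those expansions, align them into maximally fine common abelian decompositions, and collect the resulting atoms; this should produce exactly the eight ordered pairs listed in the claim. For $i \geq 4$, Proposition~\ref{Trib.5} gives $\Z(N_{i-1}) = \Z(N_3) = \Z(N_2) \cup \left\{\begin{pmatrix}00102\\20100\end{pmatrix}\right\}$, so the contributions of the seven pairs in $\Z(N_2)$ already reproduce $\Z(M_3)$ and the only extra work is to handle the single additional source pair. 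Using the factorisations $\varphi^4(00102) = \varphi^4(0)\varphi^4(0102)$ and $\varphi^4(20100) = \varphi^4(2)\varphi^4(0)\varphi^4(1)\varphi^4(0)\varphi^4(0)$ — together with the expansion of $\varphi^4(00102)$ already written out in the proof of Proposition~\ref{Trib.5} — I would verify that the corresponding co-decomposition consists only of atoms already present in $\Z(M_3)$.

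The hard part is not conceptual but purely computational: lining up the long images of $\varphi^4$ letter by letter and identifying the finest common Parikh-preserving splittings, both for the seven source pairs in the base case and for the single extra pair $\begin{pmatrix}00102\\20100\end{pmatrix}$ in the inductive step. Because every image of $\varphi^4$ required has already been spelled out in Proposition~\ref{Trib.5}, this bookkeeping strictly parallels the one carried out there, and once it is complete the equality of $\Z(M_i)$ with the claimed eight-element set follows uniformly for all $i \geq 3$.
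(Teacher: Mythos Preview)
Your proposal is correct and follows essentially the same approach as the paper: rewrite $\langle M_i\rangle_F=(\langle N_{i-1}\rangle_F,0,0,0,0)$, apply~\eqref{Z M0} with $k=4$, compute the seven co-decompositions for $i=3$ from $\Z(N_2)$, and for $i\geq4$ use $\Z(N_{i-1})=\Z(N_3)=\Z(N_2)\cup\{\begin{smallmatrix}00102\\20100\end{smallmatrix}\}$ so that only the extra pair needs checking. One small caveat: the alignments in Proposition~\ref{Trib.5} compared $\varphi^4(z)$ against $0^{-1}\varphi^4(\tilde z)0$, whereas here you need $\varphi^4(z)$ against $\varphi^4(\tilde z)$ itself, so while the underlying words can be reused, the co-decompositions must be redone from scratch (as the paper in fact does).
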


\begin{proof}

For all $i\geq1$, it holds $\langle M_i\rangle_F=(\langle N_{i-1}\rangle_F,0,0,0,0)$, where $N_i$ are the numbers defined in Proposition~\ref{Trib.5}. Therefore $\Z(M_i)$ can be determined using the result of Proposition~\ref{Trib.5} and formula~\eqref{Z M0}:
$$
\Z(M_i)=\left\{\left.\Codec\begin{pmatrix}\varphi^4(z)\\ \varphi^4(\tilde{z})\end{pmatrix}\;\right|\;\begin{pmatrix}z\\ \tilde{z}\end{pmatrix}\in\Z(N_{i-1})\right\}\,.
$$

For $i=3$, we have
$
\Z(M_3)=
\bigcup_{\begin{pmatrix}z\\ \tilde{z}\end{pmatrix}\in\Z(N_2)}\Codec\begin{pmatrix}\varphi^4(z)\\ \varphi^4(\tilde{z})\end{pmatrix}
$,
where
$$
\Z(N_2)=\left\{\begin{pmatrix}01\\10\end{pmatrix},\begin{pmatrix}02\\20\end{pmatrix},\begin{pmatrix}0\\0\end{pmatrix},\begin{pmatrix}0102\\2010\end{pmatrix},\begin{pmatrix}1\\1\end{pmatrix},\begin{pmatrix}2\\2\end{pmatrix},\begin{pmatrix}201\\102\end{pmatrix}\right\}
$$
see Section~\ref{AC5}. Let us find the decompositions:
$$
\begin{array}{ccccccccccccccccccc}
\varphi^4(01)&=&01020100102010\:102\:0\:10\:0\:102 && \varphi^4(0)&=&0102010010201 \\
\varphi^4(10)&=&01020100102010\:201\:0\:01\:0\:201 && \varphi^4(0)&=&0102010010201
\end{array}
$$
$$
\begin{array}{ccccccccccc}
\varphi^4(02)&=&01020100102010\:10\:20\:10 && \varphi^4(1)&=&01020100102 \\
\varphi^4(20)&=&01020100102010\:01\:02\:01 && \varphi^4(1)&=&01020100102
\end{array}
$$
$$
\begin{array}{ccccccccccccccccccc}
\varphi^4(0102)&=&01020100102010\:10\:20\:10\:0102010\:2010\:0102010\:10\:20\:10 \\
\varphi^4(2010)&=&01020100102010\:01\:02\:01\:0102010\:0102\:0102010\:01\:02\:01
\end{array}
$$
$$
\begin{array}{ccccccccccc}
\varphi^4(2)&=&0102010 && \varphi^4(201)&=&01020100102010\:0102\:010\:102\:010\:0102\\
\varphi^4(2)&=&0102010 && \varphi^4(102)&=&01020100102010\:2010\:010\:201\:010\:2010
\end{array}
$$
Hence
$$
\Z(M_3)=\left\{\begin{pmatrix}0\\0\end{pmatrix},\begin{pmatrix}1\\1\end{pmatrix},\begin{pmatrix}2\\2\end{pmatrix},\begin{pmatrix}102\\201\end{pmatrix},\begin{pmatrix}10\\01\end{pmatrix},\begin{pmatrix}20\\02\end{pmatrix},\begin{pmatrix}2010\\0102\end{pmatrix},\begin{pmatrix}0102\\2010\end{pmatrix}\right\}\,.
$$

Let us proceed to $i\geq4$. It holds $\Z(N_{i-1})=\Z(N_3)$ for all $i\geq 4$, therefore
\begin{multline*}
\Z(M_i)
=\bigcup_{\begin{pmatrix}z\\ \tilde{z}\end{pmatrix}\in\Z(N_{i-1})}\Codec\begin{pmatrix}\varphi^4(z)\\ \varphi^4(\tilde{z})\end{pmatrix}=
\bigcup_{\begin{pmatrix}z\\ \tilde{z}\end{pmatrix}\in\Z(N_3)}\Codec\begin{pmatrix}\varphi^4(z)\\ \varphi^4(\tilde{z})\end{pmatrix}=\\
=
\bigcup_{\begin{pmatrix}z\\ \tilde{z}\end{pmatrix}\in\Z(N_2)\cup\left\{\begin{pmatrix}00102\\20100\end{pmatrix}\right\}}\Codec\begin{pmatrix}\varphi^4(z)\\ \varphi^4(\tilde{z})\end{pmatrix}
=\Z(M_3)\cup\Codec\begin{pmatrix}\varphi^4(00102)\\ \varphi^4(20100)\end{pmatrix}\,.
\end{multline*}
Since
$$
\begin{array}{ccccccccccccccccccc}
\varphi^4(00102)&=&01020100102010\:10\:20\:10\:0102010\:10\:20\:10\:0\:102\:0\:10\:20\:10\:0102010\:10\:20\:10 \\
\varphi^4(20100)&=&01020100102010\:01\:02\:01\:0102010\:01\:02\:01\:0\:201\:0\:01\:02\:01\:0102010\:01\:02\:01
\end{array}
$$
we obtain for all $i\geq4$:
\begin{multline*}
\Z(M_i)=\Z(M_3)\cup\left\{\begin{pmatrix}0\\0\end{pmatrix},\begin{pmatrix}1\\1\end{pmatrix},\begin{pmatrix}2\\2\end{pmatrix},\begin{pmatrix}10\\01\end{pmatrix},\begin{pmatrix}20\\02\end{pmatrix},\begin{pmatrix}102\\201\end{pmatrix}\right\}=\\
=\left\{\begin{pmatrix}0\\0\end{pmatrix},\begin{pmatrix}1\\1\end{pmatrix},\begin{pmatrix}2\\2\end{pmatrix},\begin{pmatrix}102\\201\end{pmatrix},\begin{pmatrix}10\\01\end{pmatrix},\begin{pmatrix}20\\02\end{pmatrix},\begin{pmatrix}2010\\0102\end{pmatrix},\begin{pmatrix}0102\\2010\end{pmatrix}\right\}\,.
\end{multline*}
\end{proof}

\begin{corollary}
If $\langle M_i\rangle_F=((1,0,0,0)^i,0)$, then for $i\geq3$,
$$
\Pred(M_i)=\left\{(0,0,0),(0,-1,1),(1,-1,0),(1,0,-1),(0,1,-1),(-1,0,1)\right\},
$$
therefore $\AC(M_i)=6$ for all $i\geq3$. Consequently, the abelian complexity of the Tribonacci word attains the value $6$ infinitely many times.
\end{corollary}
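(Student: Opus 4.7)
The plan is to follow exactly the template used in Corollary~\ref{Coro.Trib.4} and its analogue for the value~$5$. Proposition~\ref{Trib.6} gives an explicit formula for $\Z(M_i)$ that is identical for all $i\geq 3$, so by Proposition~\ref{prop.Pred(n).} the set $\Pred(M_i)$ is also identical for all such $i$. It therefore suffices to compute $\Pred(M_i)$ once from the fixed eight-element set $\Z(M_i)$ and then invoke~\eqref{ACred}.

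The computation itself is a direct application of formula~\eqref{Pred(n).}: for each of the eight pairs $\begin{pmatrix}z\\ \tilde{z}\end{pmatrix}\in\Z(M_i)$, I enumerate the prefix pairs $(r,s)$ with $r$ a prefix of $z$, $s$ a prefix of $\tilde{z}$, and $|r|=|s|$, and record the vector $\Psi(s)-\Psi(r)$. The three diagonal pairs $\begin{pmatrix}0\\0\end{pmatrix}$, $\begin{pmatrix}1\\1\end{pmatrix}$, $\begin{pmatrix}2\\2\end{pmatrix}$ contribute only the zero vector. The five remaining pairs $\begin{pmatrix}102\\201\end{pmatrix}$, $\begin{pmatrix}10\\01\end{pmatrix}$, $\begin{pmatrix}20\\02\end{pmatrix}$, $\begin{pmatrix}2010\\0102\end{pmatrix}$, $\begin{pmatrix}0102\\2010\end{pmatrix}$ each produce at most five vectors (corresponding to prefix lengths $0$ through $|z|\leq 4$), and together they should cover precisely the five non-zero vectors $(0,-1,1),(1,-1,0),(1,0,-1),(0,1,-1),(-1,0,1)$ listed in the statement.

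Taking the union over all eight pairs yields the claimed six-element set $\Pred(M_i)$. By~\eqref{ACred} this gives $\AC(M_i)=\#\Pred(M_i)=6$ for every $i\geq 3$, and since the sequence $\{M_i\}_{i\geq 3}$ is infinite and strictly increasing (consecutive members differ in their normal $F$-representation by a prepended block $(1,0,0,0)$), the value $6$ is attained by $\AC_\mathbf{t}$ infinitely many times.

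The only real obstacle is bookkeeping: checking for each of the five non-trivial pairs that the Parikh-vector differences of matched-length prefixes land exactly in the claimed set, without producing any extra vector such as $(-1,1,0)$ (which is conspicuously absent from the listed $\Pred(M_i)$). This is mechanical once the pairs are written out letter by letter, but needs to be done carefully so that no prefix length is overlooked and no spurious vector is introduced.
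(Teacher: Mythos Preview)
Your proposal is correct and follows exactly the same approach as the paper, which simply instructs to apply formula~\eqref{Pred(n).} to the set $\Z(M_i)$ from Proposition~\ref{Trib.6} in the manner of Corollary~\ref{Coro.Trib.4}. In fact you supply more detail than the paper's one-line proof, and a quick check confirms that the five non-diagonal pairs yield precisely the five non-zero vectors $(0,-1,1),(1,-1,0),(1,0,-1),(0,1,-1),(-1,0,1)$ and nothing else.
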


\begin{proof}
The statement can be proved using formula~\eqref{Pred(n).} in the same way as Corollary~\ref{Coro.Trib.4}.
\end{proof}

\begin{remark}
The reader may wonder how to find, for a given word $\u$, the sequence $(n_i)_i$ such that $\AC_\u(n_i)=k$ for all $i\geq i_0$. The simplest way is usually either to directly check ``nice'' sequences, like e.g. $\langle n_i\rangle_F=((1,0^\ell)^i,1)$ for various $\ell$, or to conduct a computer experiment and then choose $(n_i)_i$ according to its results, or to combine these two approaches.
\end{remark}

\section{Conclusions}

We have proposed a method for dealing with the abelian complexity of infinite words, based on the recurrence property of the given word and on the use of the so called relative Parikh vectors.
For the sake of simplicity and clarity, we have concentrated mainly on infinite words associated with Parry numbers, however, it is likely that other recurrent words, especially those which are fixed points of primitive substitutions, could be explored similarly.

In order to demonstrate the use of the method, we have solved an open problem related to the Tribonacci word $\mathbf{t}$. The word $\mathbf{t}$ is simple Parry, non-simple Parry words can be treated in the same way: for example, we are able to prove for the words $\u^{(p)}$ (with a parameter $p>2$) mentioned in the Introduction
that every value of $\AC_{\u^{(p)}}$ is attained infinitely many times as well. Naturally, involving a parameter makes the analysis generally more complicated, but it is still feasible.

It is possible that further development of this approach can help to solve also the more difficult problem of characterizing all numbers $n$ for which $\AC_\u(n)$ attains a specified value, at least in cases when the images of $\AC_\u$ have low cardinalities.

The application of the abelian co-decomposition to the Parry words relies on the associated normal $F$-representations.
It is noteworthy that
the procedure of determining $\AC_\u(n)$
seems to be particularly efficient if $\langle n\rangle_F$ is periodic. This fact together with the property described in Remark~\ref{infinitely} indicates that there probably exists a very close relation between $\AC_\u(n)$ and the coefficients of $\langle n\rangle_F$, in other words, if an explicit formula for $\AC_\u(n)$ is sought, it shall be sought in terms of $\langle n\rangle_F$, similarly as it has already been done for infinite words associated with quadratic Parry numbers in~\cite{BBT}.


\section*{Acknowledgements}

The author thanks Karel B\v rinda for a numerical experiment and Edita Pelantov\'a for many valuable comments on the manuscript.


\end{document}